\documentclass[11pt]{amsart}
\usepackage[T1]{fontenc}
\usepackage{lmodern}
\usepackage[margin=1.05in]{geometry}
\usepackage{amsmath,amssymb,mathtools}
\usepackage{microtype}
\numberwithin{equation}{section}
\usepackage[colorlinks=true,linkcolor=blue,citecolor=blue,urlcolor=blue]{hyperref}
\newtheorem{theorem}{Theorem}[section]
\newtheorem{proposition}[theorem]{Proposition}
\newtheorem{lemma}[theorem]{Lemma}
\newtheorem{corollary}[theorem]{Corollary}
\theoremstyle{definition}
\newtheorem{definition}[theorem]{Definition}
\theoremstyle{remark}
\newtheorem{remark}[theorem]{Remark}
\newcommand{\N}{\mathbb N}
\newcommand{\R}{\mathbb R}
\newcommand{\Baire}{\N^{\N}}
\newcommand{\T}{\N^{<\N}}
\newcommand{\Einf}{E_{\ell_\infty}}
\newcommand{\red}{\leq_B}
\newcommand{\bired}{\sim_B}
\newcommand{\cat}{\mathbin{{}^\frown}}
\newcommand{\restr}{\mathbin{\upharpoonright}}
\newcommand{\norm}[1]{\left\lVert #1\right\rVert}
\newcommand{\abs}[1]{\left|#1\right|}
\DeclareMathOperator{\diam}{diam}
\DeclareMathOperator{\spanR}{span_{\R}}

\title[No universal $F_\sigma$ equivalence relation]
{There is no universal $F_\sigma$ equivalence relation}
\author{Ruijun Wang}
\date{}
\subjclass[2020]{Primary 03E15}
\keywords{Borel reducibility, $F_\sigma$ equivalence relation, coset relation,
Hilbert space, Baire category, transfinite rank}

\begin{document}
\begin{abstract}
Rosendal proved that any $K_\sigma$ equivalence relation is Borel reducible to $\ell_\infty$. We prove that for every $F_\sigma$ equivalence relation $F$ on a Polish space, there is an $F_\sigma$ real linear subspace $H$ of $\ell_2$ such that the coset relation $x\mathrel{E_H}y$ if and only if $x-y\in H$ has no Baire-measurable reduction to $F$. In particular, there is no $F_\sigma$ equivalence relation universal under Borel reducibility for all $F_\sigma$ equivalence relations. This answers a question of Rosendal.
\end{abstract}
\maketitle

\section{Introduction}\label{sec:introduction}

A classification problem asks the complexity between two mathematical objects. Invariant descriptive set theory studies such problems by representing the objects as equivalence relations on Polish spaces. Its basic comparison is \emph{Borel reducibility}.
For equivalence relations $E$ and $F$, a \emph{Borel reduction} from $E$ to $F$, denoted by $E\leq_B F$,  is a Borel function $f:X\to Y$ such that $$xEy\Longleftrightarrow f(x)Ff(y).$$ We say that $E$ is Borel reducible to $F$. Intuitively, $F$ is more complex than $E$. For example, the orbit equivalence relation of an irrational rotation on a unit circle is not Borel reducible to the equality, so it does not admit a real number as an invariant.

An influential step in the development was the work
of Friedman and Stanley \cite{FS89}, who developed a reducibility theory
for classes of countable structures and compared natural classes of
algebraic structures. Another landmark was the
Harrington--Kechris--Louveau dichotomy \cite{HKL90}: a Borel equivalence
relation either admits classification by real numbers, called \emph{smooth} equivelance relations, or is at least as complicated as eventual equality
of binary sequences, denoted by $E_0$. This theorem identifies a fundamental boundary
between classification problems that admit simple complete invariants
and those that do not.

The subject also developed through the study of Polish group actions.
Becker and Kechris \cite{BK96} gave a systematic treatment of their
descriptive set theory and orbit equivalence relations. Hjorth's
turbulence theory \cite{Hjo00} supplied a major method for showing that
certain orbit relations cannot be classified by countable structures.
These developments connected topology and dynamics with logic and the
comparison of classification problems.

We refer readers to Gao's monograph \emph{Invariant Descriptive Set Theory} \cite{Gao09}
for this subject of research. An equivalence relation is usually classified by several benchmark equivalence relations, see \cite[Figure 15.1]{Gao09} and \cite[Figure 15.2]{Gao09} for concrete benchmark equivalence relations. In particular, \cite[Section 8.4]{Gao09} talks about examples of $F_\sigma$ equivalence relations and the question considered in this paper.

An equivalence relation $E$ is called a universal $\mathcal E$ equivalence relation if any equivalence relation in $\mathcal E$ is Borel reducible to $E$. For example, $E_\infty$ is a universal countable Borel equivalence relation. A benchmark of interest is \emph{bounded difference}, denoted by $E_{\ell_\infty}$, or $\ell_\infty$ for short,  on the space
\(\R^{\N}\) of real sequences:
\[
 u\mathrel{\Einf}v
 \quad\Longleftrightarrow\quad
 \sup_{i\in\N}\abs{u(i)-v(i)}<\infty.
\]
Rosendal \cite[Proposition~19]{Ros05} proved that \(\Einf\) is Borel
bireducible with a universal \(K_\sigma\) equivalence relation.
Consequently, every \(K_\sigma\) equivalence relation is Borel reducible
to \(\Einf\). Here \(K_\sigma\) means a countable union of compact sets,
whereas \(F_\sigma\) means a countable union of closed sets.
Rosendal asked whether every \(F_\sigma\) equivalence relation also
reduces to this benchmark \cite[Problem~28]{Ros05}.
Gao gives the universality theorem as
\cite[Theorem~8.4.2]{Gao09} and records the question on page~196.
It is also recorded as an open problem in Lecomte
\cite{Lec20} and in Question~1 of Rossegger, Slaman, and Steifer
\cite{RSS25}.

We answer Rosendal's question negatively.

\begin{theorem}\label{thm:main}
For every $F_\sigma$ equivalence relation $F$ on a Polish space, there
is an $F_\sigma$ real linear subspace $H$ of the real Hilbert space
$\ell_2$ such that the $F_\sigma$ equivalence relation
\[
 x\mathrel{E_H}y\quad\Longleftrightarrow\quad x-y\in H
\]
has no Baire-measurable reduction to $F$. In particular, there is no
universal $F_\sigma$ equivalence relation under Borel reducibility.
\end{theorem}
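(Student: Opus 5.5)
We argue by diagonalization against a transfinite rank attached to $F$. Write $F=\bigcup_n F_n$ with each $F_n$ closed, and replace $F_n$ by $\bigcup_{m\le n}F_m$, so that the sequence is increasing and contains the diagonal. The coset relations $E_H$ carry a natural complexity measure. Given a reduction $f$ of $E_H$ to $F$ that is Baire measurable, choose a dense $G_\delta$ set $C\subseteq\ell_2$ on which $f$ is continuous. Under such an $f$, every pair $(x,x+h)$ with $h\in H$ lands in some closed $F_n$, and we analyse this using Baire category on $C\times H_k$. Here $H=\bigcup_k H_k$ with each $H_k$ closed; we will in fact take each $H_k$ compact-like or finite-dimensional. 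To a closed relation $F_n$ we attach a derivative process. At each stage we discard the points of the relevant space, namely finite sequences of approximations or the tree $\T$ of basic open sets, at which $F_n$ cannot witness "equivalence along a direction" in a locally generic way. Iterating this transfinitely gives an ordinal rank. Because $F$ is $F_\sigma$ and the process is Borel (indeed $\Pi^1_1$-bounded), the ranks occurring for $F$ are bounded by a single countable ordinal $\alpha_F$. This will be the \emph{rank of $F$}.

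Next, for each countable ordinal $\alpha$ we construct an $F_\sigma$ subspace $H_\alpha\le\ell_2$ whose rank is at least $\alpha$. We build it by transfinite recursion on well-founded trees $T\subseteq\T$ of rank $\alpha$. Fix an orthonormal basis $(e_s)_{s\in\T}$ of $\ell_2$. Let $H_T$ be the linear span, closed under a suitable $\ell_2$-summable weighting, of vectors $\sum_{i\le |s|}c_i e_{s\restr i}$ along branches of $T$. Concretely, $H_T=\bigcup_k H_{T,k}$, where $H_{T,k}$ is the set of vectors having coefficient norms bounded by $k$ along the tree and a nesting condition. Each $H_{T,k}$ is closed, so $H_T$ is an $F_\sigma$ subspace. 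Well-foundedness keeps the union genuinely a union of closed sets. Its rank grows with the rank of $T$ because of how long chains $e_{s\restr 1},e_{s\restr 2},\dots$ must be added step by step. Thus $E_{H_T}$ requires at least $\operatorname{rank}(T)$ derivative steps.

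The key step is a monotonicity lemma: if $E_H$ has a Baire-measurable reduction $f$ to $F$, then $\operatorname{rank}(H)\le\operatorname{rank}(F)$. We prove it by induction on the derivative stages. At each stage, a category argument pushes a nonmeagre piece of the derivative for $E_H$ to a nonempty piece of the derivative for $F$. Continuity of $f$ on $C$ makes the images of basic open sets small. Closedness of $F_n$ then allows passing to limits, and the Kuratowski--Ulam theorem on $C\times H_k$ localizes which $F_n$ is responsible. Given the lemma, we take $H=H_T$ with $\operatorname{rank}(T)>\alpha_F$, and no reduction exists. The second sentence of the theorem follows at once. A universal $F_\sigma$ relation $F$ would Borel-reduce $E_H$, and Borel maps are Baire measurable.

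The main obstacle is to define the rank so that two things hold at once. First, it must be bounded for every fixed $F_\sigma$ relation; a boundedness argument is needed to ensure it does not reach $\omega_1$. Second, it must transfer under merely Baire-measurable, not continuous, reductions. The difficulty lies in handling the fact that different pairs use different $F_n$. To do this I would first try to phrase the derivative as a game or a tree of "closed witnesses along finite-dimensional directions", relativized to comeagre sets. Then the Baire-category localization can be repeated at every level of the induction.
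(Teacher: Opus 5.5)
Your outline has the right global shape: a countable rank attached to $F$, subspaces of unbounded rank, and a transfer lemma for Baire-measurable reductions. But the load-bearing pieces are missing.

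\textbf{The ranks and the transfer.} Neither rank is defined: ``points at which $F_n$ cannot witness equivalence along a direction'' is not a derivative. So the boundedness claim and the monotonicity lemma have no content, and you yourself leave the definition as the open obstacle. The missing idea is a specific pair of derivatives and the triple linking them.
\begin{itemize}
\item On the target side, work on $Z_n=\{(a,b,c):(a,b),(b,c)\in F_n\}$ and remove points where $m_n(a,b,c)=\min\{k:(a,c)\in F_k\}$ is locally bounded. Baire category makes each step strict, and a countable-base argument bounds the length; no $\Pi^1_1$-boundedness is needed.
\item On the source side, take a continuous generating map $j:P\to H$ and remove pairs $(t,u)$ near which all ambient limits of $j(t)+j(u)$ stay in $H$.
\item The transfer sends $(t,u)$ to $\bigl(f(x+j(t)),f(x),f(x-j(u))\bigr)$. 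Suppose $m_n$ were bounded by $k$ near this triple, and take an escaping sequence $j(s_l)+j(r_l)\to z\notin H$. Choose a base point $v$ generically \emph{after} the sequence is fixed, so only countably many translates of $C$ are intersected. Then $(f(v),f(v-z))\in F_k$, i.e.\ $z\in H$, a contradiction.
\end{itemize}
Nothing in your sketch plays this role. ``A category argument pushes a nonmeagre piece'' is not yet a proof step.

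\textbf{The construction.} As described, it cannot work. A well-founded $T$ has only finite branches, so your generators $\sum_{i\le|s|}c_ie_{s\restr i}$ are finitely supported and $H_T$ has countable dimension. For such $H$, choose an orthonormal sequence $(u_k)$ whose algebraic span is $H$. Then $x\mathrel{E_H}y$ iff $x,y$ have the same component orthogonal to $\overline H$ and $\langle x-y,u_k\rangle=0$ for all large $k$. Hence $E_H\leq_B E_1$, where $E_1$ is eventual equality on $(2^{\N})^{\N}$. Since $E_1$ is $K_\sigma$, Rosendal's theorem gives $E_H\leq_B E_{\ell_\infty}$. Your own monotonicity lemma would then bound the rank of every $H_T$ by that of $E_{\ell_\infty}$, so increasing $\operatorname{rank}(T)$ buys nothing. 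Your suggestion of compact or finite-dimensional pieces $H_k$ points the same wrong way.

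Rank comes from the opposite features: a closed, non-locally-compact set of infinite-support branch vectors indexed by all of $\Baire$. Its $F_\sigma$ span contains no nonzero finitely supported vector, yet for nearby branches $j(x_n)+j(y_n)\to 2v_s+2r_sh_s$, a finitely supported vector. This rank-one family is then amplified by mass-matched direct sums at successors and by a weighted countable tensor construction at limits, each time with a separate proof that the span remains $F_\sigma$.
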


This paper is a collaboration with artificial intelligence. The AI model is gpt 6. The AI proves the main theorem under human suggestions and writes the main body of early drafts. The author writes the abstract and induction and improves readability. The author has reviewed all AI-generated content.

The rest of the paper is organized as follows. Section~\ref{sec:preliminaries} supplies the definitions and elementary
facts used in the proof. Sections~\ref{sec:target} and
\ref{sec:escape} define the two derivatives and establish their
comparison. Section~\ref{sec:construction} constructs the initial
branch family. Section~\ref{sec:tensors} proves the tensor span
lemma, Section~\ref{sec:transfinite} performs the ordinal recursion,
and Section~\ref{sec:conclusion} finishes the argument. The proof
does not invoke turbulence, effective descriptive set theory, or
Polishability of the subgroups.

\section{Preliminaries}\label{sec:preliminaries}

\subsection{Polish spaces and definable sets}

A \emph{Polish space} is a topological space whose topology is induced
by a complete separable metric. The real line, separable Hilbert spaces,
and countable products of Polish spaces are examples. Closed subsets and
\(G_\delta\) subsets of Polish spaces are Polish in their subspace topologies. All occurrences
of \(\R^{\N}\) below refer to the product topology. For instance, it is
induced by the complete metric
\[
 d(u,v)=\sum_{i=0}^{\infty}2^{-i-1}
              \min\{1,\abs{u(i)-v(i)}\}.
\]
Convergence in this topology is exactly convergence in each coordinate.

The \emph{Borel sets} of a topological space form the smallest
\(\sigma\)-algebra containing its open sets. A map is \emph{Borel} if
the inverse image of every open set is Borel. A set is \(F_\sigma\)
if it is a countable union of closed sets, and \(G_\delta\) if it is
a countable intersection of open sets. It is \(K_\sigma\) if it is a
countable union of compact sets. In a metrizable space every compact
set is closed, so every \(K_\sigma\) set is \(F_\sigma\).

An equivalence relation on a Polish space \(X\) is called Borel,
\(F_\sigma\), or \(K_\sigma\) according to its complexity as a subset
of \(X\times X\). These latter two descriptions refer to the specified
topology on \(X\). For further background on these definitions and
their role in classification, see \cite{Gao09}.

\subsection{Borel reducibility and coset relations}

\begin{definition}
Let \(E\) and \(F\) be equivalence relations on Polish spaces \(X\)
and \(Y\). A \emph{Borel reduction} of \(E\) to \(F\) is a Borel map
\(f:X\to Y\) such that, for all \(x,x'\in X\),
\[
 x\mathrel{E}x'
 \quad\Longleftrightarrow\quad
 f(x)\mathrel{F}f(x').
\]
We write \(E\red F\) if such a map exists, and \(E\bired F\) if
both \(E\red F\) and \(F\red E\).
A relation \(F\) is \emph{universal for a class of equivalence relations}
if every member of that class is Borel reducible to \(F\).
\end{definition}

A reduction need not be injective on points; it does induce an
injection on equivalence classes. Borel reducibility is transitive,
because compositions of Borel maps are Borel.

An equivalence relation is \emph{smooth} if it is Borel reducible to
equality on \(\R\). This says that real numbers can be assigned as
complete invariants by a Borel map. A basic nonsmooth benchmark is
eventual equality \(E_0\) on \(\{0,1\}^{\N}\):
\[
 x\mathrel{E_0}y
 \quad\Longleftrightarrow\quad
 (\exists m)(\forall n\ge m)\ x(n)=y(n).
\]
The dichotomy mentioned in the introduction says that every Borel
equivalence relation is either smooth or has \(E_0\) Borel reducible
to it \cite{HKL90}. We will not use this theorem in the proof.

If a group \(G\) acts on \(X\), its \emph{orbit equivalence relation}
identifies \(x\) and \(y\) when \(g\cdot x=y\) for some \(g\in G\).
A particularly simple algebraic example is a coset relation.
For an additive subgroup \(H\) of a real Banach space \(B\), define
\[
 b\mathrel{E_H}c\quad\Longleftrightarrow\quad b-c\in H.
\]
The subgroup axioms give reflexivity, symmetry, and transitivity.
If \(H\) is \(F_\sigma\) in \(B\), then \(E_H\) is \(F_\sigma\) in
\(B\times B\), since subtraction is continuous. We do not assume
that \(H\), with its subspace topology, is a Polish group.

The space
\[
 \ell_\infty=\{u\in\R^{\N}:\sup_i\abs{u(i)}<\infty\}
\]
is an additive subgroup of \(\R^{\N}\); its coset relation is
\(\Einf\). We use no norm topology on this subgroup. Define the
extended distance
\[
 \Delta(u,v)=\sup_{i\in\N}\abs{u(i)-v(i)}\in[0,\infty].
\]
It satisfies the triangle inequality, and
\(u\mathrel{\Einf}v\) exactly when \(\Delta(u,v)<\infty\).
For each integer \(M\ge1\), the set
\[
 R_M=\{(u,v):\Delta(u,v)\le M\}
     =\bigcap_{i\in\N}\{(u,v):\abs{u(i)-v(i)}\le M\}
\]
is closed in the product topology. Thus \(\Einf=\bigcup_{M\ge1}R_M\)
is \(F_\sigma\). If \(u_n\to u\) coordinatewise and
\(\Delta(u_n,v)\le M\) for all \(n\), then \(\Delta(u,v)\le M\),
by passing to the limit in each coordinate.

\begin{theorem}[Rosendal {\cite[Proposition~19]{Ros05}}]\label{thm:rosendal}
Every \(K_\sigma\) equivalence relation on a Polish space is Borel
reducible to \(\Einf\). Moreover, \(\Einf\) is Borel bireducible
with a \(K_\sigma\) equivalence relation.
\end{theorem}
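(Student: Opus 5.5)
The plan is to prove the two assertions of Theorem~\ref{thm:rosendal} separately. For the first, I will turn a $K_\sigma$ equivalence relation into an integer-valued pseudo-metric with compact sublevel sets, and then embed it coordinatewise into $\R^{\N}$ by distances to small compact sets. For the second, I will exhibit an explicit compact space on which the $\Delta$-relation is $K_\sigma$ and which is bireducible with $\Einf$.

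\emph{Step 1: a graded presentation of $E$.} Let $E\subseteq X\times X$ be $K_\sigma$, say $E=\bigcup_n K_n$ with $K_n$ compact. Since $\{(x,x):x\in X\}\subseteq E$, the space $X$ is the continuous projection of a $K_\sigma$ set, so $X=\bigcup_n C_n$ with $C_n$ compact and increasing. Recall that the composition $K\circ L$ of compact relations is the projection of the compact set $\{(x,y,z):(x,y)\in L,\ (y,z)\in K\}$, and is therefore compact. I will define compact sets $L_1\subseteq L_2\subseteq\cdots$ recursively. Let $L_1=K_1\cup K_1^{-1}\cup\{(x,x):x\in C_1\}$. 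For $n\ge 2$, let $L_n$ be the union of $K_n$, $K_n^{-1}$, the diagonal of $C_n$, and all compositions $L_a\circ L_b$ with $a+b=n$. Each $L_n$ is compact, symmetric, and contained in $E$, and $\bigcup_n L_n=E$. Moreover $L_a\circ L_b\subseteq L_{a+b}$. Set
\[
 \rho(x,y)=\min\{n:(x,y)\in L_n\}\in\N\cup\{\infty\}.
\]
Then $\rho$ is symmetric, satisfies the triangle inequality, and $\rho(x,y)<\infty$ exactly when $x\mathrel{E}y$. For a set $B\subseteq X$ put $\rho(x,B)=\inf_{b\in B}\rho(x,b)$. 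The triangle inequality gives $\abs{\rho(x,B)-\rho(y,B)}\le\rho(x,y)$ whenever the left side is finite.

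\emph{Step 2: the reduction.} Fix a compatible metric on $X$. Let $(B_i)_{i\in\N}$ enumerate the compact sets $C_n\cap\overline{U}$, where $n\in\N$ and $U$ ranges over a countable basis. Define $f(x)(i,j)=\min\{\rho(x,B_i),j\}$ for $i,j\in\N$, viewed as an element of $\R^{\N\times\N}\cong\R^{\N}$.

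For Borelness, note that $\rho(x,B_i)\le m$ iff $x$ lies in the projection of the compact set $L_m\cap(X\times B_i)$. This set is compact, so each coordinate of $f$ is Borel.

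If $x\mathrel{E}y$, the inequality from Step 1, together with the fact that truncation at $j$ is $1$-Lipschitz, gives $\Delta(f(x),f(y))\le\rho(x,y)<\infty$.

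Conversely, suppose $x$ and $y$ are not $E$-related, and choose $n_0$ with $y\in C_{n_0}$, so that $\rho(y,y)\le n_0$. Fix $m$. I claim that some $B_i\ni y$ of the form $C_{n_0}\cap\overline U$ has $\rho(x,B_i)>m$. If not, there are $b_k\to y$ with $(x,b_k)\in L_m$, and since $L_m$ is closed, $(x,y)\in L_m$, a contradiction. Taking $j>m$, the coordinate $(i,j)$ gives $\abs{f(x)(i,j)-f(y)(i,j)}\ge m-n_0$. Hence $\Delta(f(x),f(y))=\infty$.

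\emph{Step 3: $\Einf$ is bireducible with a $K_\sigma$ relation.} First, $u\mapsto(\lfloor u(i)\rfloor)_i$ reduces $\Einf$ to $\Einf$ restricted to $\mathbb Z^{\N}$, since $\Delta$ changes by at most $1$. Next, consider the compact space
\[
 Y=\prod_{(i,t)\in\N\times\N}\bigl([-t,t]\cap\mathbb Z\bigr).
\]
Map $a\in\mathbb Z^{\N}$ to $g(a)\in Y$ with $g(a)(i,t)=\max\{-t,\min\{a(i),t\}\}$. Clamping is $1$-Lipschitz and is eventually exact in $t$, so $\sup_t\abs{g(a)(i,t)-g(b)(i,t)}=\abs{a(i)-b(i)}$. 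Hence $\Delta(g(a),g(b))=\Delta(a,b)$, and $g$ is a continuous reduction. On $Y$ the relation $\Delta<\infty$ equals $\bigcup_M R_M\cap(Y\times Y)$, a countable union of closed subsets of a compact space, so it is $K_\sigma$. In the other direction, the inclusion $Y\hookrightarrow\R^{\N\times\N}\cong\R^{\N}$ reduces this relation to $\Einf$.

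\emph{Main obstacle.} The delicate point is Step 1. The sets $L_n$ must be built so that the triangle inequality holds for $\rho$ while each $L_n$ stays compact. Compactness is essential in Step 2, both for Borelness of $x\mapsto\rho(x,B)$ and for the limit argument showing $\rho(x,B_i)\to\infty$ along shrinking neighbourhoods of $y$.
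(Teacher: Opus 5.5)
The paper does not prove Theorem~\ref{thm:rosendal}. It imports the result from \cite[Proposition~19]{Ros05} (see also \cite[Theorem~8.4.2]{Gao09}) and adds only Remark~\ref{rem:ksigma}, so there is no internal argument to compare with, and your proposal has to stand on its own. It does, after one repair described below. It follows the standard Rosendal strategy:
\begin{enumerate}
\item a compact symmetric exhaustion with $L_a\circ L_b\subseteq L_{a+b}$;
\item the integer-valued ``composition distance'' $\rho$;
\item truncated distances to closures of basic open sets;
\item a compact model of $\Einf$.
\end{enumerate}
Your $f$ already lands in the compact product $\prod_{(i,j)}\{1,\ldots,j\}$ with $\Delta$ preserved. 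So Steps 1--2 directly give a reduction to a $K_\sigma$ relation of the same kind as your $Y$ in Step 3. Step 3 is also consistent with Remark~\ref{rem:ksigma}: the $K_\sigma$ witness is $\Delta<\infty$ restricted to a compact subspace, not the full presentation on $\R^{\N}$.

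\textbf{The repair.} As you define them, the sets $L_n$ need not be increasing. Nothing forces a pair of $K_{n-1}$ into $L_n$: composing it with a diagonal pair only puts it into $L_{2n-2}$. For example, take $X=\{p,q,r\}$, $E=X^2$, and
\begin{itemize}
\item $K_1=K_3=\{(r,r)\}$, $K_2=\{(p,q)\}$, $K_n=X^2$ for $n\ge4$;
\item $C_1=\{r\}$, $C_2=C_3=\{p,r\}$, $C_n=X$ for $n\ge4$.
\end{itemize}
Then $L_3=\{(p,p),(r,r)\}$, so $(p,q)\in L_2\setminus L_3$.

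This does not affect symmetry or the triangle inequality for $\rho$. Step 2, however, uses monotonicity twice:
\begin{itemize}
\item in ``$\rho(x,B_i)\le m$ iff $x\in\pi_1(L_m\cap(X\times B_i))$'', which fails in the example with $x=p$, $B_i=\{q\}$, $m=3$;
\item in ``$(x,b_k)\in L_m$''.
\end{itemize}
There are two easy fixes. You can assume the $K_n$ are increasing; then $L_{n-1}\subseteq L_n$ follows by induction from $L_a\circ L_b\subseteq L_a\circ L_{b+1}$. Alternatively, replace $L_m$ in those two places by the compact set $\bigcup_{k\le m}L_k$.

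\textbf{A minor simplification.} Intersecting with $C_{n_0}$ is unnecessary. The set $L_m\cap(X\times\overline U)$ is already compact because $L_m$ is, and $\rho(y,B_i)\le\rho(y,y)$ holds for any $B_i\ni y$.
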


\begin{remark}\label{rem:ksigma}
Being universal for \(K_\sigma\) relations does not assert that the
given presentation of \(\Einf\) is itself \(K_\sigma\).
In fact it is not \(K_\sigma\) as a subset of
\((\R^{\N})^2\). Otherwise its intersection with the closed diagonal
would be \(K_\sigma\), making \(\R^{\N}\) a countable union of compact
sets. To see that this is impossible, given compact sets \(K_n\subseteq
\R^{\N}\), choose \(x(n)\) outside the compact set
\(\{u(n):u\in K_n\}\). Then \(x\notin\bigcup_nK_n\).
Theorem~\ref{thm:rosendal} is a statement about Borel reducibility
and Borel bireducibility.
\end{remark}

\subsection{Category and continuity on a comeager set}

A set is \emph{nowhere dense} if its closure has empty interior,
and \emph{meager} if it is a countable union of nowhere dense sets.
Its complement is then \emph{comeager}. A space is a \emph{Baire space}
if every countable intersection of dense open sets is dense.
The Baire category theorem states that complete metric spaces are
Baire spaces. In particular, a nonempty open subset of a Polish space
cannot be meager in that space.

A set has the \emph{Baire property} if it differs from an open set
by a meager set. Sets with the Baire property form a \(\sigma\)-algebra
containing the open sets. For countable unions this follows directly
from the definition. For complements, the complement of an open set
differs from its interior by a subset of the boundary of that open
set, which is nowhere dense. Consequently every Borel set has the
Baire property. A map is \emph{Baire measurable} if inverse images
of open sets have the Baire property; every Borel map is Baire
measurable. A Baire-measurable reduction is defined by the same
equivalence-preserving and equivalence-reflecting condition as a Borel
reduction, with the map required only to be Baire measurable.

\begin{lemma}\label{lem:restriction}
Let \(X\) be a Polish space and \(Y\) a second-countable space.
Every Baire-measurable map \(f:X\to Y\) has a continuous restriction
to some comeager \(G_\delta\) subset \(C\subseteq X\).
\end{lemma}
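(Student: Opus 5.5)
We prove Lemma~\ref{lem:restriction} by the standard argument through a countable base. Fix a countable base \(\{V_n\}_{n\in\N}\) for the topology of \(Y\), which exists because \(Y\) is second countable. By Baire measurability, each preimage \(f^{-1}(V_n)\) has the Baire property. So for each \(n\) we can choose an open set \(U_n\subseteq X\) and a meager set \(M_n\subseteq X\) with
\[
 f^{-1}(V_n)\,\triangle\, U_n\subseteq M_n .
\]
Each meager \(M_n\) is contained in a countable union of closed nowhere dense sets, because the closure of a nowhere dense set is nowhere dense. We may therefore assume that each \(M_n\) is an \(F_\sigma\) meager set.

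Put \(M=\bigcup_n M_n\). This is a countable union of closed nowhere dense sets, so it is meager and \(F_\sigma\). Define \(C=X\setminus M\). Then \(C\) is a \(G_\delta\) set and it is comeager.

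It remains to check that \(f\restr C\) is continuous. It suffices to test continuity on the basic open sets \(V_n\). For each \(n\) we have
\[
 (f\restr C)^{-1}(V_n)=f^{-1}(V_n)\cap C=U_n\cap C .
\]
This holds because on \(C\) the sets \(f^{-1}(V_n)\) and \(U_n\) agree: they differ only inside \(M_n\), and \(M_n\) is disjoint from \(C\). The set \(U_n\cap C\) is relatively open in \(C\). Every open subset of \(Y\) is a union of some of the \(V_n\), so the preimage under \(f\restr C\) of any open set is relatively open in \(C\). Hence \(f\restr C\) is continuous.

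The only point needing care is the reduction from an arbitrary meager set to a meager \(F_\sigma\) set, so that the complement \(C\) is genuinely \(G_\delta\) rather than merely comeager. As noted above, this follows by replacing each nowhere dense set with its closure. The Baire category theorem is not needed for the lemma itself. It enters only later, to guarantee that \(C\) is dense and nonempty in \(X\).
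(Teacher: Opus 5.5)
Your proof is correct and follows essentially the same route as the paper's. Both choose open approximations $U_n$ to the preimages $f^{-1}(V_n)$ of a countable base, enlarge the meager errors to a meager $F_\sigma$ set by closing the nowhere dense pieces, take $C$ to be its complement, and observe that $(f\restr C)^{-1}(V_n)=U_n\cap C$.
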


\begin{proof}
Let \((U_n)_{n\in\N}\) be a countable base for \(Y\). Choose open
\(V_n\subseteq X\) such that
\(f^{-1}(U_n)\mathbin{\triangle}V_n\) is meager.
The union of these errors is contained in a meager \(F_\sigma\)
set \(F\): take a countable cover by nowhere dense sets and replace
each by its closure. Set \(C=X\setminus F\). For every \(n\),
\[
 (f|_C)^{-1}(U_n)=C\cap V_n,
\]
which is open in \(C\). These identities prove continuity of the
restriction.
\end{proof}

The conclusion concerns the relative topology on \(C\). It does not
assert that the original map is continuous at each point of \(C\)
when approached through arbitrary points of \(X\).

\begin{lemma}\label{lem:category}
The following facts will be used.
\begin{enumerate}
\item A dense \(G_\delta\) subspace of a Baire space is a Baire space.
\item If \(p:X\to Y\) is continuous and open, the inverse image of
a dense open subset of \(Y\) is dense open in \(X\).
Consequently, inverse images of comeager \(G_\delta\) sets under
such a map are comeager \(G_\delta\).
\item Translation by a fixed vector in a Banach space preserves
comeager sets.
\end{enumerate}
\end{lemma}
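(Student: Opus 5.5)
The three items are independent, and each follows from the definitions together with the Baire category theorem; I will take them in order.

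For (1), let \(G=\bigcap_n W_n\) be a dense \(G_\delta\) subset of a Baire space \(X\), with each \(W_n\) open and dense in \(X\). Take relatively dense open sets \(O_k\subseteq G\), and write \(O_k=G\cap U_k\) with \(U_k\) open in \(X\). Since \(G\) is dense and \(O_k\) is dense in \(G\), the set \(U_k\) is dense in \(X\). Then \(\bigcap_k O_k=\bigcap_k U_k\cap\bigcap_n W_n\) is a countable intersection of dense open subsets of \(X\), so it is dense in \(X\) and hence dense in \(G\). The only point needing care is the density of \(U_k\): if a nonempty open \(V\subseteq X\) missed \(U_k\), then \(V\cap G\) would be a nonempty relatively open subset of \(G\) missing \(O_k\), which is impossible.

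For (2), let \(D\subseteq Y\) be dense open. Continuity makes \(p^{-1}(D)\) open. For density, take a nonempty open \(V\subseteq X\). Openness of \(p\) makes \(p(V)\) a nonempty open subset of \(Y\), so it meets \(D\) at some \(p(v)\) with \(v\in V\), and then \(v\in V\cap p^{-1}(D)\). A comeager \(G_\delta\) set in \(Y\) contains a countable intersection of dense open sets. In a Baire space, a \(G_\delta\) set is comeager exactly when it is dense, and then it equals \(\bigcap_n D_n\) with each \(D_n\) open and dense. Preimages commute with intersections, so the preimage is \(\bigcap_n p^{-1}(D_n)\), which is a comeager \(G_\delta\).

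I expect the one delicate point to be this last step. If the comeager \(G_\delta\) set \(C\) is written as \(\bigcap_n O_n\), each \(O_n\) contains \(C\). When \(Y\) is Baire, \(C\) is dense, so each \(O_n\) is dense open and the argument above applies. In general I would instead choose dense open \(D_n\) with \(\bigcap_n D_n\subseteq C\), and use \(p^{-1}(C)\supseteq\bigcap_n p^{-1}(D_n)\) for comeagerness. Since \(C=\bigcap_n O_n\) with \(O_n\) open, \(p^{-1}(C)\) is \(G_\delta\) by continuity.

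For (3), translation \(\tau_b:x\mapsto x+b\) is a homeomorphism of the Banach space, with inverse \(\tau_{-b}\). Homeomorphisms preserve dense open sets and countable intersections. So if \(C\supseteq\bigcap_n D_n\) with each \(D_n\) dense open, then \(C+b\supseteq\bigcap_n(D_n+b)\) with each \(D_n+b\) dense open, and \(C+b\) is comeager.
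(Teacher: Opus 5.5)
Your proof is correct and follows the paper's argument in all three items: the decomposition $O_k=G\cap U_k$ with $U_k$ dense in $X$ for (1), openness of $p$ giving density of $p^{-1}(D)$ in (2), and translations being homeomorphisms for (3). For the comeager consequence in (2), your general-case argument via dense open $D_n$ with $\bigcap_n D_n\subseteq C$ is the dual form of the paper's remark that preimages of closed nowhere dense sets are closed nowhere dense, so, as the paper also implicitly uses, no Baire hypothesis on $Y$ is needed.
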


\begin{proof}
For (1), write \(D=\bigcap_nG_n\), where each \(G_n\) is open and
dense in the ambient Baire space \(X\). Write any sequence of
relatively open dense subsets of \(D\) as \(D\cap V_n\), with
\(V_n\) open in \(X\). Each \(V_n\) is dense in \(X\), since \(D\)
is dense. The Baire property of \(X\), applied to all \(G_n\) and
\(V_n\), shows that their intersection is dense in \(X\), and hence
dense in \(D\).

For (2), continuity gives openness of the inverse image. If
\(W\subseteq X\) is nonempty open, then \(p(W)\) is nonempty open,
so it meets the prescribed dense open set. This proves density.
Equivalently, inverse images of closed nowhere dense sets are closed
nowhere dense. Since every meager set is contained in a countable
union of closed nowhere dense sets, inverse images of meager sets
are meager. Continuity also preserves the \(G_\delta\) property
under inverse images, giving the stated consequence.
Finally, (3) follows because translations are homeomorphisms.
\end{proof}

\subsection{Branches and Hilbert coordinates}

We use \(\N=\{0,1,2,\ldots\}\). The \emph{Baire space}
\(P=\Baire\) is the product of countably many copies of the
discrete space \(\N\). It is Polish: a complete compatible metric
sets the distance between distinct \(x,y\) equal to \(2^{-m}\),
where \(m\) is their first differing coordinate; eventually zero
sequences form a countable dense set.

Let \(T=\T\) be the set of finite sequences of natural numbers.
For \(s\in T\), write \(\abs{s}\) for its length and \(s\cat k\)
for the sequence obtained by appending \(k\).
For \(x\in P\), \(x\restr L\) is its prefix of length \(L\).
The same notation applies to initial segments of a finite sequence.
We say that \(x\) or a finite sequence \(t\) \emph{extends} \(s\)
if its first \(\abs{s}\) entries are those of \(s\).
The cylinders
\[
 [s]=\{x\in P:x\text{ extends }s\}\qquad(s\in T)
\]
form a countable base of nonempty sets that are both open and closed.

For a countable index set \(I\), the real Hilbert space \(\ell_2(I)\)
consists of families \(b=(b_i)_{i\in I}\) with
\(\sum_{i\in I}b_i^2<\infty\), equipped with the corresponding norm.
A subset of the standard orthonormal basis determines a
\emph{closed coordinate subspace}: its closed linear span.
The orthogonal projection onto that subspace simply keeps the
specified coordinates and sets all others to zero.
These projections have norm at most one. A vector has
\emph{finite coordinate support} if only finitely many of its
coordinates are nonzero.

\subsection{Countable ordinals and decreasing derivatives}

We use ordinals to index repeated removal operations. The ordinal
$\omega$ is the order type of the natural numbers, and $\omega_1$ is
the first uncountable ordinal. An ordinal $\alpha+1$ is a successor;
a nonzero ordinal with no immediate predecessor is a limit. Every
countable limit ordinal $\lambda$ has an increasing cofinal sequence
$(\alpha_n)$: each $\alpha_n<\lambda$, and every $\xi<\lambda$ is at
most some $\alpha_n$. A countable supremum of countable ordinals is
again countable.

If an operation $D$ sends a closed set to a closed subset of itself,
its transfinite iteration starts from $A^0=A$, takes
$A^{\alpha+1}=D(A^\alpha)$ at successors, and takes
$A^\lambda=\bigcap_{\alpha<\lambda}A^\alpha$ at limits. These are
decreasing closed sets. Transfinite induction proves an assertion
by treating stage zero, showing preservation at a successor, and
showing preservation at a limit. Our constructions use only
countable stages. The rank terminology below refers to the number
of iterations needed to remove points; no effective coding of
ordinals is used.

\subsection{Hilbert direct sums and tensor products}

All vector spaces in the construction are real. If $(B_i)$ is a
countable family of Hilbert spaces, its Hilbert direct sum consists
of tuples $(v_i)$ with $\sum_i\|v_i\|^2<\infty$, with squared norm
equal to that sum. Coordinate projections are bounded linear maps.
If all $B_i$ are separable, so is the direct sum. A real Hilbert
space with a countably infinite orthonormal basis is isometric to
$\ell_2$, the space of square-summable real sequences.

For real Hilbert spaces $U,V$, their Hilbert tensor product
$U\widehat\otimes V$ is the completion of their algebraic tensor
product for the inner product determined by
\[
 \langle u\otimes v,u'\otimes v'\rangle
 =\langle u,u'\rangle\langle v,v'\rangle.
\]
In particular $\|u\otimes v\|=\|u\|\|v\|$. Given orthonormal bases
$(e_j)$ and $(f_i)$, the vectors $e_j\otimes f_i$ form an orthonormal
basis of the completed tensor product. Thus its elements can be
viewed as square-summable matrices $(t_{ji})$. Such a matrix defines
a bounded operator $T:V\to U$ by its columns. These operators are
called Hilbert--Schmidt operators, and their Hilbert--Schmidt norm
is $(\sum_{j,i}|t_{ji}|^2)^{1/2}$. Under this identification,
$u\otimes v$ is $x\mapsto\langle x,v\rangle u$.

We will use several elementary consequences. The operator maps
$T\mapsto Tf_i$ and $T\mapsto T^*e_j$ are continuous in this norm.
Operators of rank at most $r$ form a closed set: all their
$(r+1)\times(r+1)$ coordinate minors vanish, and conversely vanishing
of these minors forces rank at most $r$. Bounded linear functionals
can be applied to selected tensor factors; these contractions are
bounded linear maps on the tensor product. Finite Hilbert tensor
products are associative, and distribute over finite or countable
Hilbert direct sums by matching the orthonormal bases. These facts
justify the finite packet projections and contractions used below.

A \emph{closed embedding} is a continuous injective map that is a
homeomorphism onto a closed subset of its target. A real-valued
function is \emph{locally constant} if it is constant on some
neighborhood of each point. Each nonempty fiber of a locally
constant function is open, so such a function on a second countable
space has countable range. Its fibers are also closed. These
observations will make the tensor closed cover countable.

\section{A composition rank for a fixed target}\label{sec:target}

Let $F$ be an $F_\sigma$ equivalence relation on a nonempty Polish space
$Y$. Fix an increasing sequence $(F_n)_{n\geq1}$ of closed reflexive
symmetric subsets of $Y^2$ with union $F$. Such a sequence exists:
starting with any closed cover, take finite unions, add their inverses,
and add the diagonal. No $F_n$ is assumed transitive.

For each $n$ put
\[
 Z_n=\{(a,b,c)\in Y^3:(a,b)\in F_n,\ (b,c)\in F_n\}.
\]
This is a closed Polish subspace of $Y^3$. Transitivity of $F$ makes
the following function finite everywhere on $Z_n$:
\[
 m_n(a,b,c)=\min\{k\geq1:(a,c)\in F_k\}.
\]
Its sublevel sets are closed. For a closed subset $A$ of $Z_n$ define
\[
 \partial_n A
 = A\setminus\bigcup_{k\geq1}
       \operatorname{Int}_A\bigl(A\cap\{m_n\leq k\}\bigr).
\]
Thus $\partial_n A$ consists of the points at which $m_n$ is unbounded
on every relative neighborhood in $A$. It is closed. Define
\[
 Z_n^0=Z_n,\qquad Z_n^{\xi+1}=\partial_n Z_n^\xi,
 \qquad Z_n^\lambda=\bigcap_{\xi<\lambda}Z_n^\xi
 \quad(\lambda\text{ a limit ordinal}).
\]

\begin{lemma}\label{lem:target-countable}
For each $n$ there is a countable ordinal $r_n$ such that $Z_n^{r_n}$
is empty. Consequently $\gamma_F=\sup_{n\geq1}r_n$ is countable.
\end{lemma}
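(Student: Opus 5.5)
The plan is the standard Cantor--Bendixson-type argument: a strictly decreasing transfinite sequence of closed subsets of a second countable space must stabilize at a countable stage. After that, a Baire category argument shows the stable set is empty.

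\emph{Step 1 (stabilization).} Fix $n$. The sets $Z_n^\xi$ are closed and decreasing, and $Z_n$ is second countable with a countable base $(U_j)_j$. For each $\xi$ with $Z_n^{\xi+1}\neq Z_n^\xi$, pick a point $p\in Z_n^\xi\setminus Z_n^{\xi+1}$. Since $Z_n^{\xi+1}$ is closed, choose a basic $U_j$ with $p\in U_j$ and $U_j\cap Z_n^{\xi+1}=\emptyset$. This $j$ satisfies $U_j\cap Z_n^\xi\neq\emptyset$ and $U_j\cap Z_n^{\xi+1}=\emptyset$, so distinct such $\xi$ receive distinct indices $j$ (by monotonicity, the stage $\xi$ is determined by $j$ as the last stage where $U_j$ meets the set). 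Hence only countably many stages are strict drops. I will then take $r_n$ to be a countable ordinal beyond all of them, which exists because the drops are countably many countable ordinals, or else a strict drop would occur at uncountably many stages below $\omega_1$. This gives $Z_n^{r_n+1}=Z_n^{r_n}$.

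\emph{Step 2 (a fixed point of $\partial_n$ is empty).} Let $A=Z_n^{r_n}$ be closed with $\partial_nA=A$, and suppose $A\neq\emptyset$. Then $A$ is a nonempty closed subset of the Polish space $Z_n$, hence Polish and a Baire space. Since $m_n$ is finite on $Z_n$, we have $A=\bigcup_k\bigl(A\cap\{m_n\le k\}\bigr)$, and each piece is closed because the sublevel sets are closed, which follows from each $F_k$ being closed. By the Baire category theorem some piece $A\cap\{m_n\le k\}$ has nonempty interior in $A$. Any point of that interior lies in $A\setminus\partial_nA$, contradicting $\partial_nA=A$. Hence $Z_n^{r_n}=\emptyset$.

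\emph{Step 3.} There are countably many $n$, so $\gamma_F=\sup_n r_n$ is a countable supremum of countable ordinals and is therefore countable. The main obstacle is only Step 1's bookkeeping, namely making the injection from drop stages into base indices precise. Step 2 is where the finiteness of $m_n$, and hence the transitivity of $F$, is used.
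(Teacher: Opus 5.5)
Your proof is correct and follows essentially the paper's argument. It uses the same two ingredients: the countable-base injection bounding the number of strict drops, and the Baire category fact that $\partial_n$ strictly shrinks every nonempty closed set. You just order them as ``stabilize at a countable stage, then the fixed point must be empty'' instead of ``nonemptiness forces a strict drop at every successor, which cannot happen $\omega_1$ times.''
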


\begin{proof}
If $A$ is a nonempty closed subset of $Z_n$, its closed subsets
$A\cap\{m_n\leq k\}$ cover it. By the Baire category theorem, at least
one has nonempty relative interior. Hence $\partial_n A\ne A$.
In a second countable space a decreasing sequence of closed sets can
have only countably many strict successor decreases. Indeed, for each
such decrease choose a basic open set meeting the earlier set and
disjoint from the later one; that basic open set cannot witness any
subsequent decrease. If all $Z_n^\xi$ for $\xi<\omega_1$ were nonempty,
every successor step would be strict, a contradiction. The supremum of
countably many countable ordinals is countable.
\end{proof}

We use only the rank of this fixed closed cover. We do not need to
minimize over covers or topologies, nor to assign an invariant rank to
the Borel reducibility degree of $F$.

\section{Escape derivatives and reductions}\label{sec:escape}

Let $H$ be an additive subgroup of a separable real Banach space $B$,
let $P$ be a nonempty Polish space, and let $j:P\to H\subseteq B$ be
continuous. Write $\sigma(t,u)=j(t)+j(u)$. For a closed $R\subseteq P^2$
define
\[
 \mathcal D_{j,H}(R)=
 \left\{p\in R:\text{for every open }U\ni p,
       \ \overline{\sigma(R\cap U)}^{\,B}\not\subseteq H\right\}.
\]
In words, a point survives if arbitrarily near it there are sums with
an ambient limit outside $H$. This is a closed subset of $R$: a
neighborhood whose image closure is contained in $H$ witnesses removal
of every point of $R$ in that neighborhood. Put
\[
 R_{j,H}^0=P^2,\qquad
 R_{j,H}^{\xi+1}=\mathcal D_{j,H}(R_{j,H}^\xi),\qquad
 R_{j,H}^\lambda=\bigcap_{\xi<\lambda}R_{j,H}^\xi.
\]
We often abbreviate this to $R^\xi$ when the family and subgroup are
fixed. All closures of sums are in the ambient Banach space, not in
the subgroup.

\begin{lemma}[Locality and transfer]\label{lem:escape-tools}
The following facts hold.
\begin{enumerate}
\item If $O\subseteq P$ is open, the derivatives of the restricted
family $j|_O$, computed on $O^2$, are $R_{j,H}^\xi\cap O^2$.
\item Suppose $i:Q\to K\subseteq C$ is another such family. Let
$a,b:Q\to P$ be continuous. Suppose that a vector $v\in B$ and a
bounded linear map $T:C\to B$ satisfy
\[
 j(a(t))+j(b(u))=v+T(i(t)+i(u))\quad(t,u\in Q),
\]
and that $v+T(z)\notin H$ whenever $z\notin K$. Then
\[
 (a\times b)(R_{i,K}^\xi)\subseteq R_{j,H}^\xi
 \quad\text{for every countable }\xi.
\]
\end{enumerate}
\end{lemma}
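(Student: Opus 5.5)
Both parts go by transfinite induction on $\xi$, checking the successor step directly from the definition of $\mathcal D_{j,H}$ and passing through limits by intersection.

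\emph{Part (1).} We prove by induction that the $\xi$-th derivative of $j|_O$, computed on $O^2$, equals $R_{j,H}^\xi\cap O^2$. At stage zero both sides are $O^2$. At a limit stage, intersections commute with intersecting by $O^2$. For the successor step, let $S=R^\xi_{j,H}$ and suppose the $\xi$-th derivative for $j|_O$ is $S\cap O^2$. Take $p\in S\cap O^2$.
\begin{itemize}
\item Since $O^2$ is open, the open neighborhoods of $p$ in $O^2$ are cofinal among all open neighborhoods $U$ of $p$ in $P^2$.
\item For $U\subseteq O^2$ we have $(S\cap O^2)\cap U=S\cap U$, so the sets $\sigma(\cdot)$ and their ambient closures in $B$ coincide.
\item The condition ``every neighborhood has closure not contained in $H$'' is monotone in shrinking $U$, so it suffices to test it on cofinal neighborhoods.
\end{itemize}
Hence $p$ survives for $j|_O$ iff it survives for $j$. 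That is, $\mathcal D_{j|_O,H}(S\cap O^2)=\mathcal D_{j,H}(S)\cap O^2$.

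\emph{Part (2).} Again we induct on $\xi$. Stage zero is trivial, since $(a\times b)(Q^2)\subseteq P^2$. At a limit stage, the image of an intersection lies in the intersection of the images, and each image lies in $R^\alpha_{j,H}$ by hypothesis.

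For the successor step, assume $(a\times b)(R^\xi_{i,K})\subseteq R^\xi_{j,H}$. Let $q=(t,u)\in R^{\xi+1}_{i,K}$, put $p=(a(t),b(u))$, and let $U\ni p$ be open in $P^2$. Set $V=(a\times b)^{-1}(U)$, which is open by continuity and contains $q$. Since $q$ survives, there is a point $z\in\overline{\sigma_i(R^\xi_{i,K}\cap V)}^{\,C}$ with $z\notin K$. Choose pairs $(t_k,u_k)\in R^\xi_{i,K}\cap V$ with $i(t_k)+i(u_k)\to z$. Their images $(a(t_k),b(u_k))$ lie in $R^\xi_{j,H}\cap U$ by the induction hypothesis. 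By the identity in the hypothesis,
\[
 \sigma_j\bigl(a(t_k),b(u_k)\bigr)=v+T\bigl(i(t_k)+i(u_k)\bigr).
\]
Boundedness of $T$ makes the right-hand side converge to $v+T(z)$, which is not in $H$ by assumption. So $\overline{\sigma_j(R^\xi_{j,H}\cap U)}^{\,B}\not\subseteq H$. Since $p\in R^\xi_{j,H}$ and $U$ was arbitrary, $p\in R^{\xi+1}_{j,H}$.

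\emph{Potential obstacle.} The only delicate point is making sure closures are taken in the ambient spaces $C$ and $B$, not in the subgroups. In particular, $z$ must be an ambient limit point, and continuity of $z\mapsto v+T(z)$ on all of $C$ is what transfers it to $B$. The rest is bookkeeping. Ordinal countability plays no role, so the inclusion holds for every ordinal $\xi$.
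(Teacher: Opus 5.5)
Your proposal is correct and follows the paper's proof essentially step for step. For (1) you shrink neighborhoods into $O^2$, and for (2) you pull the target neighborhood back by $a\times b$, transport an escaping source sequence via the induction hypothesis, and pass to the limit using boundedness of $T$ (the membership $p\in R^\xi_{j,H}$ is just the induction hypothesis applied to $q\in R^{\xi+1}_{i,K}\subseteq R^\xi_{i,K}$).
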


\begin{proof}
The first assertion follows by transfinite induction: at a point of
$O^2$ all neighborhoods in the definition may be shrunk to lie in
$O^2$. For the second, the assertion is immediate at stage zero and
is preserved by intersections at limits. At a successor stage, pull
an arbitrary target neighborhood back by $a\times b$. The source
derivative provides a sequence of pairs in the preceding source
remainder whose sums converge to some $z\notin K$. By induction the
image pairs lie in the preceding target remainder. Their sums converge
to $v+T(z)\notin H$, proving the successor assertion.
\end{proof}

For comparison, when $H$ is $F_\sigma$, every such escape derivative
also terminates at a countable stage. If $H=\bigcup_k C_k$ with $C_k$
closed and $R$ is nonempty closed, the closed sets
$R\cap\sigma^{-1}(C_k)$ cover $R$. Baire category gives a nonempty
relative open part whose image has closure in one $C_k$. Thus the
derivative removes a nonempty part. The countable-base argument of
Lemma~\ref{lem:target-countable} applies. The construction below will
make these countable ranks arbitrarily large.

\begin{lemma}[A rectangle with uniformly bounded edges]
\label{lem:rectangle}
Suppose $f:B\to Y$ is a Baire-measurable reduction of $E_H$ to the
relation $F$ of Section~\ref{sec:target}. Let $C\subseteq B$ be a
comeager $G_\delta$ set on which $f$ is continuous. There exist an
integer $n\geq1$ and nonempty open sets $V\subseteq B$, $O\subseteq P$
such that, for $x\in V$ and $t\in O$,
\begin{align*}
 x,x+j(t)\in C&\ \Longrightarrow\ (f(x+j(t)),f(x))\in F_n,\\
 x,x-j(t)\in C&\ \Longrightarrow\ (f(x),f(x-j(t)))\in F_n.
\end{align*}
Each implication requires only the two displayed endpoints in $C$.
\end{lemma}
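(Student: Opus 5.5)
Since $j(t)\in H$, the points $x$ and $x+j(t)$ are always $E_H$-related, and so are $x-j(t)$ and $x$. Because $f$ is a reduction, $(f(x+j(t)),f(x))\in F=\bigcup_n F_n$ and $(f(x),f(x-j(t)))\in F$ whenever everything is defined. The plan is to localize one index $n$ by a Baire category argument on the product space $B\times P$.

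For each $n$, define
\[
 A_n=\{(x,t)\in B\times P:\ x,x+j(t)\in C\Rightarrow (f(x+j(t)),f(x))\in F_n,\ \text{and}\ x,x-j(t)\in C\Rightarrow (f(x),f(x-j(t)))\in F_n\}.
\]
These sets increase with $n$ and cover $B\times P$. The aim is to show that some $A_n$ contains a nonempty open rectangle $V\times O$.

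First we examine the structure of $A_n$. The map $(x,t)\mapsto x+j(t)$ is continuous and open from $B\times P$ to $B$: for fixed $t$, translation by $j(t)$ is a homeomorphism of $B$. So by Lemma~\ref{lem:category}(2), the set $G^+=\{(x,t):x\in C,\ x+j(t)\in C\}$ is comeager $G_\delta$, and likewise $G^-=\{(x,t):x\in C,\ x-j(t)\in C\}$. On $G^+$ the map $(x,t)\mapsto(f(x+j(t)),f(x))$ is continuous, because it is the composition of continuous maps into $C$ with $f|_C$. Hence $\{(x,t)\in G^+:(f(x+j(t)),f(x))\in F_n\}$ is relatively closed in $G^+$, and the same holds on $G^-$. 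Set $G=G^+\cap G^-$. Then $A_n\cap G$ is relatively closed in $G$, and $G$ is a dense $G_\delta$ in the Polish space $B\times P$, so $G$ is Polish and in particular Baire. Since the sets $A_n\cap G$ cover $G$, the Baire category theorem applied in $G$ gives some $n$ and a basic open rectangle $V\times O$ with $\emptyset\neq G\cap(V\times O)\subseteq A_n$.

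The main obstacle is passing from "$A_n$ contains $G\cap(V\times O)$" to "$A_n$ contains all of $V\times O$", because a point $(x,t)$ may satisfy $x,x+j(t)\in C$ while $x-j(t)\notin C$. Such a point lies in $G^+$ but not in $G$. We fix this by treating the two implications separately. Apply the category argument first to the closed cover of $G^+$ by the sets $A_n^+\cap G^+$, where $A_n^+$ records only the first implication. This yields $n_1$ and a rectangle $V_1\times O_1$ containing $G^+\cap(V_1\times O_1)$ inside $A^+_{n_1}$. Any point of $V_1\times O_1$ at which the hypothesis of the first implication holds lies in $G^+$, so the first implication holds on all of $V_1\times O_1$. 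Next, run the same argument inside the open set $V_1\times O_1$ for the second implication using $G^-$, obtaining $n_2$ and a smaller rectangle $V\times O$. Finally take $n=\max(n_1,n_2)$; since the $F_n$ increase, both implications hold with this single $n$. Both rectangles are nonempty because $G^\pm$ are dense, so each nonempty relative open set of $G^\pm$ is the trace of a nonempty open set.
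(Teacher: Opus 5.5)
Your proof is correct. It shares the paper's core idea: apply Baire category on a dense $G_\delta$ subset of $B\times P$, where continuity of $f|_C$ makes the sets defined by $(f(x+j(t)),f(x))\in F_n$ relatively closed. The two arguments differ only in how they handle the clause that just two endpoints need to lie in $C$.

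The paper applies category once, on $D=\{(x,t):x,x+j(t),x-j(t)\in C\}$. This gives a single $n$ and a rectangle on which both conditions hold at points of $D$. The paper then extends each implication to points of $V\times O$ where only the relevant two endpoints lie in $C$. It does this by approximating from the dense set $D\cap(V\times O)$, using continuity of $f|_C$ at those two endpoints and closedness of $F_n$.

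You instead run the category argument separately: first on $G^+$, then on $G^-\cap(V_1\times O_1)$. Since $G^{\pm}$ is exactly the set where the hypothesis of the corresponding implication holds, the two-endpoint clause comes for free and no limit argument is needed. The cost is a second category application and the passage to $n=\max(n_1,n_2)$, which is legitimate because the $F_n$ increase.

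One small correction to your last sentence. Density of $G^-$ is what tells you that $G^-\cap(V_1\times O_1)$ is a nonempty open subspace of the Polish space $G^-$. That is what makes the second application of the Baire category theorem produce a nonempty relatively open piece. The final rectangles are then nonempty simply because each contains a point of $G^{\pm}$.
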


\begin{proof}
The maps $(x,t)\mapsto x$, $(x,t)\mapsto x+j(t)$ and
$(x,t)\mapsto x-j(t)$ are continuous and open. Consequently
\[
 D=\{(x,t):x,x+j(t),x-j(t)\in C\}
\]
is a dense $G_\delta$ subset of $B\times P$, and is Polish. On $D$,
the simultaneous conditions
\[
 (f(x+j(t)),f(x))\in F_n,\qquad
 (f(x),f(x-j(t)))\in F_n
\]
define relatively closed sets. They cover $D$, since $j(t)\in H$ and
$f$ is a reduction. Baire category gives an $n$ and an open rectangle
$V\times O$ whose intersection with $D$ satisfies both conditions.
For the first implication at a point where only $x,x+j(t)\in C$ are
known, approximate $(x,t)$ by a sequence from $D\cap(V\times O)$.
The two relevant endpoints lie in $C$ throughout and converge to
points of $C$. Relative continuity of $f|_C$ and closedness of $F_n$
give the implication. The second is identical.
\end{proof}

\begin{proposition}[Comparison of the two derivatives]
\label{prop:comparison}
In the setting of Lemma~\ref{lem:rectangle}, for every countable ordinal
$\xi$, every $(t,u)\in R_{j,H}^\xi\cap O^2$, and every $x\in V$ such
that $x+j(t),x,x-j(u)\in C$, one has
\[
 \bigl(f(x+j(t)),f(x),f(x-j(u))\bigr)\in Z_n^\xi.
\]
\end{proposition}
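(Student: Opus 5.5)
We argue by transfinite induction on $\xi$, proving the statement simultaneously for all $(t,u)$ and $x$. First, the triple lies in $Z_n$ at all. By Lemma~\ref{lem:rectangle}, with $x\in V$, $t\in O$ and $x,x+j(t)\in C$, we get $(f(x+j(t)),f(x))\in F_n$. Likewise $u\in O$ and $x,x-j(u)\in C$ give $(f(x),f(x-j(u)))\in F_n$. This is stage $0$. At a limit $\lambda$, the point $(t,u)$ lies in every $R^\xi$ with $\xi<\lambda$, so the triple lies in every $Z_n^\xi$, hence in $Z_n^\lambda$.

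For the successor step, assume the claim at $\xi$ and take $(t,u)\in R^{\xi+1}\cap O^2$ and $x\in V$ with the three points in $C$. By induction the triple $q=(f(x+j(t)),f(x),f(x-j(u)))$ lies in $Z_n^\xi$. Suppose $q\notin\partial_nZ_n^\xi$. Then there are $k$ and a neighborhood $W$ of $q$ in $Y^3$ such that $m_n\le k$ on $W\cap Z_n^\xi$, i.e. $(a,c)\in F_k$ for all $(a,b,c)\in W\cap Z_n^\xi$. We seek a contradiction with $(t,u)\in\mathcal D_{j,H}(R^\xi)$.

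Use continuity of $f|_C$ at the three points $x+j(t)$, $x$, $x-j(u)$ of $C$. This gives $\delta>0$ such that, for $y\in C$ within $\delta$ of the corresponding point, the image lies in the corresponding coordinate slot of $W$. Choose a neighborhood $U=U_1\times U_2\subseteq O^2$ of $(t,u)$ with $\|j(t')-j(t)\|,\|j(u')-j(u)\|<\delta/2$ for $(t',u')\in U$; this uses continuity of $j$. By the derivative hypothesis there are pairs $(t_m,u_m)\in R^\xi\cap U$ with $j(t_m)+j(u_m)\to z\notin H$.

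The difficulty is that $x$ itself may not cooperate with $t_m,u_m$: we need $x_m$ near $x$ with $x_m,\ x_m+j(t_m),\ x_m-j(u_m)\in C$. For fixed $m$ the set of such $x_m$ is comeager, because translations preserve comeagerness (Lemma~\ref{lem:category}(3)). Pick such $x_m\in V$ with $\|x_m-x\|<\delta/2$. By the induction hypothesis, $q_m=(f(x_m+j(t_m)),f(x_m),f(x_m-j(u_m)))\in Z_n^\xi$. Each point is within $\delta$ of its reference point and lies in $C$, so $q_m\in W$. Hence $(f(x_m+j(t_m)),f(x_m-j(u_m)))\in F_k$ for all $m$.

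The main obstacle is now to pass to a limit and reach a contradiction with $z\notin H$. Set $a_m=x_m+j(t_m)$ and $c_m=x_m-j(u_m)$, so $a_m-c_m=j(t_m)+j(u_m)\to z$. We want, in the limit, two points of $C$ differing by an element outside $H$ whose $f$-images are $F_k$-related, which contradicts that $f$ is a reduction. To arrange this, I would not fix $x_m$ independently but couple them: choose $x_m$ converging to some $x^*$ with $x^*+j(t_m)$ converging, and $x^*-j(u_m)$ handled likewise. A cleaner approach is to use the $G_\delta$ structure of $C$ directly. Write $C=\bigcap_iG_i$ and build $x_m$ inductively with $x_m\to x^*$ fast, so that $a_m\to a^*$ and $c_m\to c^*$ all lie in $C$; a Cauchy/Baire diagonal argument in the spirit of Lemma~\ref{lem:rectangle} should achieve this, possibly after passing to a subsequence so that $j(t_m)$ and $j(u_m)$ converge along with their sum. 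Then $a^*-c^*=z\notin H$. Continuity of $f|_C$ and closedness of $F_k$ give $(f(a^*),f(c^*))\in F_k\subseteq F$, while the reduction property forces $a^*-c^*\in H$. This contradiction shows $q\in\partial_nZ_n^\xi=Z_n^{\xi+1}$. The delicate point I expect is ensuring the limits $a^*,c^*$ land in $C$. If $j(t_m)$ alone does not converge, I would instead translate the limit argument to the fixed endpoint $c$-side, taking $c_m\to c^*\in C$ and then $a_m=c_m+(j(t_m)+j(u_m))\to c^*+z$, choosing $c^*$ generic so that $c^*+z\in C$ as well.
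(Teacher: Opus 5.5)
The set-up agrees with the paper up to $(f(a_m),f(c_m))\in F_k$ for all $m$: stage $0$, limits, the pair $W,k$, the $\delta$ from continuity of $f|_C$, and the escape sequence. The gap is the limit passage. That passage is the real content of the successor step, and you never commit to an argument that works there.

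Your primary route fails in the cases that matter. That route couples the $x_m$ by a Cauchy/Baire diagonal argument ``after passing to a subsequence so that $j(t_m)$ and $j(u_m)$ converge''. Since $j(t_m)+j(u_m)$ converges, $j(t_m)$ has a convergent subsequence if and only if $j(u_m)$ does. When $j$ is a closed embedding, as for every admissible family the proposition is applied to, the limits would be some $j(t'),j(u')$, forcing $z=j(t')+j(u')\in H$. Concretely, the vectors $j(x_n)$ in Proposition~\ref{prop:escape} are pairwise at distance at least $\sqrt2\,r_s$. So along an escape sequence neither $j(t_m)$ nor $j(u_m)$ has a convergent subsequence. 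With $x_m$ chosen independently for each $m$, as in your previous paragraph, neither $a_m$ nor $c_m$ converges.

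Your final sentence has the right idea; it is the paper's device with the endpoints exchanged. The paper first fixes $(s_l,r_l)$ and $z$. It then picks a single $v$ near $x+j(t)$ in the comeager set $C\cap(C+z)\cap\bigcap_l(C+j(s_l))\cap\bigcap_l(C+j(s_l)+j(r_l))$ and sets $x_l=v-j(s_l)$. The third endpoints then satisfy $v-j(s_l)-j(r_l)\to v-z\in C$.

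To make your version a proof, drop the independent $x_m$ and reverse the order of choices:
\begin{enumerate}
\item Take $\delta$ so small that the $\delta$-ball about $x$ lies in $V$, and require $\|j(t_m)-j(t)\|,\|j(u_m)-j(u)\|<\delta/3$.
\item Choose $c^*$ within $\delta/3$ of $x-j(u)$ in $C\cap(C-z)\cap\bigcap_m(C-j(u_m))\cap\bigcap_m(C-j(u_m)-j(t_m))$. This set is comeager by Lemma~\ref{lem:category}(3).
\item \emph{Define} $x_m=c^*+j(u_m)$.
\end{enumerate}
Then $x_m\in V$, and the points $x_m+j(t_m),x_m,c^*$ lie in $C$ within $\delta$ of their reference points. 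The induction hypothesis gives $(f(x_m+j(t_m)),f(c^*))\in F_k$, and $x_m+j(t_m)\to c^*+z\in C$. Continuity of $f|_C$ and closedness of $F_k$ then give $(f(c^*+z),f(c^*))\in F$, hence $z\in H$, the desired contradiction.

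Letting $c_m\to c^*$ instead of $c_m=c^*$ also works, provided $c^*\in C\cap(C-z)$ is fixed first. Either way, the step you must make explicit is this: the anchoring endpoint is chosen \emph{after} the escape sequence and \emph{before} the $x_m$, generically for the relevant translates of $C$.
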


\begin{proof}
We use transfinite induction. Stage zero is precisely the two edge
conditions of Lemma~\ref{lem:rectangle}. Limit stages follow by
intersection. Suppose the statement holds at stage $\xi$, and fix a
pair $(t,u)\in R^{\xi+1}\cap O^2$ and an $x$ as in the statement.
Write
\[
 p=(f(x+j(t)),f(x),f(x-j(u))).
\]
By induction $p\in Z_n^\xi$. If $p\notin Z_n^{\xi+1}$, there are a
neighborhood $W$ of $p$ in $Y^3$ and an integer $k$ such that
\begin{equation}
 Z_n^\xi\cap W\subseteq\{m_n\leq k\}.
\label{eq:local-composition-bound}
\end{equation}
Choose a product of three open subsets of $Y$ containing $p$ and
contained in $W$. Relative continuity on $C$ gives open neighborhoods
$N_+,N_0,N_-$ of $x+j(t),x,x-j(u)$ in $B$ whose intersections with $C$
map into the three chosen target sets; also require $N_0\subseteq V$.

Continuity of $j$ now gives open neighborhoods $U_t,U_u\subseteq O$
of $t,u$ and a nonempty open neighborhood $A$ of $x+j(t)$ such that
for every $v\in A$, $s\in U_t$, and $r\in U_u$,
\begin{equation}
 v\in N_+,\qquad v-j(s)\in N_0,\qquad
 v-j(s)-j(r)\in N_-.
\label{eq:source-neighborhoods}
\end{equation}
This follows directly by taking sufficiently small norm balls around
the three original source points.

Because $(t,u)\in R^{\xi+1}$, there are pairs
$(s_l,r_l)\in R^\xi\cap(U_t\times U_u)$ and a vector $z\notin H$ with
\[
 j(s_l)+j(r_l)\longrightarrow z.
\]
The set
\[
 A\cap C\cap(C+z)\cap
   \bigcap_l(C+j(s_l))\cap
   \bigcap_l(C+j(s_l)+j(r_l))
\]
is comeager in the nonempty open set $A$. Choose $v$ in it and set
$x_l=v-j(s_l)$. All three points
\[
 v=x_l+j(s_l),\qquad x_l,\qquad
 v-j(s_l)-j(r_l)=x_l-j(r_l)
\]
belong to $C$, and $x_l\in V$. Induction puts the corresponding target
triple in $Z_n^\xi$, while \eqref{eq:source-neighborhoods} puts it in
$W$. By \eqref{eq:local-composition-bound},
\[
 (f(v),f(v-j(s_l)-j(r_l)))\in F_k\quad\text{for every }l.
\]
The last source points converge to $v-z\in C$. Continuity of $f|_C$
and closedness of $F_k$ imply $(f(v),f(v-z))\in F_k\subseteq F$.
Since $f$ is a reduction, this gives $z\in H$, a contradiction.
Therefore $p\in Z_n^{\xi+1}$, completing the induction.
\end{proof}

The fixed first endpoint $v$ in this proof is chosen after the escape
sequence has been fixed. Thus only countably many translates of $C$
are intersected. No continuity of $f$ on an affine subspace is assumed.

\begin{corollary}\label{cor:rank-obstruction}
Suppose every diagonal point $(t,t)$ belongs to $R_{j,H}^\xi$. If
$E_H$ has a Baire-measurable reduction to $F$, then $Z_n^\xi$ is
nonempty for some $n$.
\end{corollary}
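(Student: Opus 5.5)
Assume $f:B\to Y$ is a Baire-measurable reduction of $E_H$ to $F$. By Lemma~\ref{lem:restriction}, applied to the Polish space $B$ and the second-countable space $Y$, there is a comeager $G_\delta$ set $C\subseteq B$ on which $f$ is continuous. Lemma~\ref{lem:rectangle} then gives an integer $n\ge1$ and nonempty open sets $V\subseteq B$, $O\subseteq P$ with the stated edge conditions. We will apply Proposition~\ref{prop:comparison} at a single diagonal pair $(t,t)$ with $t\in O$. The hypothesis gives $(t,t)\in R_{j,H}^\xi$, and clearly $(t,t)\in O^2$.

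It remains to find $x\in V$ with $x+j(t)$, $x$, and $x-j(t)$ all in $C$. Consider the set
\[
 V\cap C\cap\bigl(C-j(t)\bigr)\cap\bigl(C+j(t)\bigr).
\]
By Lemma~\ref{lem:category}(3), translates of the comeager set $C$ are comeager, so the intersection of the three translates is comeager in $B$. Its intersection with the nonempty open set $V$ is comeager in $V$. Since $B$ is a Banach space, it is a Baire space, so $V$ is not meager and this set is nonempty. Choose $x$ in it.

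Proposition~\ref{prop:comparison}, with $u=t$, then yields
\[
 \bigl(f(x+j(t)),f(x),f(x-j(t))\bigr)\in Z_n^\xi,
\]
so $Z_n^\xi\neq\emptyset$. No step here is a real obstacle; all the work is in Proposition~\ref{prop:comparison}. The only care needed is to pick a single $t\in O$, which is possible because $O$ is nonempty, and to note that only three translates of $C$ are intersected, so comeagerness is preserved.
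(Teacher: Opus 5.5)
Your proposal is correct and matches the paper's proof essentially step for step. You obtain $C$ (via Lemma~\ref{lem:restriction}), then $n$, $V$, $O$ from Lemma~\ref{lem:rectangle}, fix $t\in O$, choose $x$ in the nonempty set $V\cap C\cap(C-j(t))\cap(C+j(t))$, and apply Proposition~\ref{prop:comparison} with $u=t$.
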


\begin{proof}
Use Lemma~\ref{lem:rectangle} and fix $t\in O$. The set
$V\cap C\cap(C-j(t))\cap(C+j(t))$ is nonempty by Baire category.
Any $x$ in it supplies a triple in $Z_n^\xi$ by
Proposition~\ref{prop:comparison}.
\end{proof}

\section{A linear subspace generated by branches}\label{sec:construction}

\subsection{Definition and tail estimates}

Let \(B\) be the real Hilbert space with orthonormal basis
\[
 \{h_s:s\in T\}\ \cup\ \{e_{s,n}:s\in T,\ n\in\N\},
\]
all of whose displayed vectors are distinct.
This basis is countably infinite, so \(B\) is isometric to \(\ell_2\).
A coordinate \(h_s\) or \(e_{s,n}\) is said to be attached to the node
\(s\).

Define positive numbers \(r_s\), for \(s\in T\), recursively by
\begin{equation}\label{eq:weights}
 r_{\varnothing}=1,\qquad
 r_{s\cat k}=2^{-(k+2)}r_s.
\end{equation}
In particular \(r_{s\cat k}\le r_s/4\), and
\(r_s\le4^{-\abs{s}}\). For \(x\in P=\Baire\), set
\begin{equation}\label{eq:branch}
 j(x)=\sum_{\ell=0}^{\infty}r_{x\restr\ell}
 \left(h_{x\restr\ell}
       +(-1)^{x(\ell)}
        e_{x\restr\ell,\lfloor x(\ell)/2\rfloor}\right).
\end{equation}
The terms at distinct levels have disjoint coordinate supports.

For \(s\in T\), define the finite prefix vector
\[
 v_s=\sum_{\ell<\abs{s}}r_{s\restr\ell}
 \left(h_{s\restr\ell}
       +(-1)^{s(\ell)}
        e_{s\restr\ell,\lfloor s(\ell)/2\rfloor}\right),
\]
where \(v_{\varnothing}=0\).
For \(x\in[s]\), put \(J_s(x)=j(x)-v_s\).

\begin{lemma}\label{lem:tailbound}
The series \eqref{eq:branch} converges in \(B\).
For \(s\in T\) and \(x\in[s]\),
\[
 \norm{J_s(x)}^2\le\frac{32}{15}r_s^2,
 \qquad
 \norm{J_s(x)}<2r_s.
\]
The map \(j:P\to B\) is continuous. The vector \(J_s(x)\) is
supported on coordinates attached to nodes extending \(s\), and
its \(h_s\)-coordinate is \(r_s\).
\end{lemma}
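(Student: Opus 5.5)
The whole lemma reduces to one explicit computation with orthonormal coordinates. For \(x\in[s]\) the levels \(\ell<\abs{s}\) of \eqref{eq:branch} are exactly the terms of \(v_s\), since \(x\restr\ell=s\restr\ell\) and \(x(\ell)=s(\ell)\) there. So formally
\[
 J_s(x)=\sum_{\ell\ge\abs{s}}r_{x\restr\ell}\Bigl(h_{x\restr\ell}+(-1)^{x(\ell)}e_{x\restr\ell,\lfloor x(\ell)/2\rfloor}\Bigr).
\]
The plan is to establish convergence first and then read off the norm, support and coordinate claims from this expression.

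\textbf{Norm bound.} The summands at different levels have disjoint supports: level \(\ell\) uses only the two coordinates \(h_{x\restr\ell}\) and \(e_{x\restr\ell,\cdot}\), and the nodes \(x\restr\ell\) are distinct. Hence the series is an orthogonal series. The level-\(\ell\) term has squared norm \(2r_{x\restr\ell}^2\). By \eqref{eq:weights}, \(r_{x\restr(\ell+1)}\le r_{x\restr\ell}/4\), so for \(\ell\ge\abs{s}\) we get \(r_{x\restr\ell}\le 4^{-(\ell-\abs{s})}r_s\). The sum of squared norms is then at most
\[
 2r_s^2\sum_{m\ge0}16^{-m}=2r_s^2\cdot\tfrac{16}{15}=\tfrac{32}{15}r_s^2.
\]
A series of pairwise orthogonal vectors with summable squared norms converges in the Hilbert space \(B\), and its squared norm is that sum. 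Taking \(s=\varnothing\) gives convergence of \eqref{eq:branch}. Since \(32/15<4\), we also get \(\norm{J_s(x)}<2r_s\).

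\textbf{Support and the \(h_s\)-coordinate.} Every coordinate appearing in \(J_s(x)\) is attached to some \(x\restr\ell\) with \(\ell\ge\abs{s}\), and each such node extends \(s\). The only occurrence of \(h_s\) is at level \(\ell=\abs{s}\), with coefficient \(r_s\). The \(e\)-coordinates are distinct from every \(h\)-coordinate, so nothing else contributes, and the \(h_s\)-coordinate is exactly \(r_s\).

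\textbf{Continuity.} Fix \(x\) and \(\varepsilon>0\), and choose \(L\) with \(4\cdot4^{-L}<\varepsilon\). If \(y\in[x\restr L]\), then \(v_{x\restr L}\) is common to both branches. So \(j(x)-j(y)=J_{x\restr L}(x)-J_{x\restr L}(y)\), which has norm less than \(4r_{x\restr L}\le4\cdot4^{-L}<\varepsilon\). The cylinder \([x\restr L]\) is an open neighbourhood of \(x\), so \(j\) is continuous. All steps are routine. The only point needing care is the orthogonality bookkeeping, that is, that distinct levels never share a coordinate, since this is what turns the norm estimate into a plain geometric series.
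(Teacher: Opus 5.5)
Your proposal is correct and follows essentially the same route as the paper: orthogonality of the level terms reduces the squared norm of the tail to the geometric bound $2r_s^2\sum_{m\ge0}16^{-m}=\tfrac{32}{15}r_s^2$. Continuity then comes from the cylinder estimate $\norm{j(x)-j(y)}<4r_{x\restr L}\le 4^{1-L}$, and the support and $h_s$-coordinate claims are read off from the formula.
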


\begin{proof}
Starting at \(s\), the squared norm of the term \(m\) levels later
is at most \(2r_s^2\,16^{-m}\). Orthogonality gives
\[
 \sum_{m=0}^{\infty}2r_s^2\,16^{-m}
 =\frac{32}{15}r_s^2<4r_s^2.
\]
This proves convergence and the tail estimates.
For \(x,y\in[s]\), the prefix vectors agree, so
\[
 \norm{j(x)-j(y)}
 \le\norm{J_s(x)}+\norm{J_s(y)}
 \le2\sqrt{32/15}\,r_s<4r_s.
\]
Consequently \(\diam j([s])<4r_s\le4^{1-\abs{s}}\), which
tends to zero with \(\abs{s}\). This proves continuity.
The support and the \(h_s\)-coordinate follow from the formula.
\end{proof}

Define the real linear subspace
\begin{equation}\label{eq:H}
 H=\spanR j(P).
\end{equation}
The span in \eqref{eq:H} is the algebraic span: its elements are
finite real linear combinations of branch vectors. No closure
is taken.

\subsection{Closed tail images and an \(F_\sigma\) decomposition}

\begin{lemma}\label{lem:closedtails}
For every \(s\in T\), the set \(J_s([s])\) is closed in \(B\).
\end{lemma}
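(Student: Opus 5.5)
The plan is to take a sequence $x_m\in[s]$ with $J_s(x_m)\to z$ in $B$ and produce $x\in[s]$ with $J_s(x)=z$. The idea is to read off the branch $x$ level by level from the limit coordinates of $z$. The formula \eqref{eq:branch} makes the coordinates of $J_s(x)$ rigid, so each coordinate of $z$ pins down a finite amount of information about $x$.

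\emph{Step 1: the coordinates of $J_s(x)$.} For $x\in[s]$ and each $\ell\ge\abs{s}$, the node $t=x\restr\ell$ carries exactly two nonzero coordinates of $J_s(x)$:
\begin{itemize}
\item the $h_t$-coordinate, equal to $r_t$;
\item the $e_{t,\lfloor x(\ell)/2\rfloor}$-coordinate, equal to $\pm r_t$.
\end{itemize}
All coordinates attached to nodes not on the branch vanish. Since the weights $r_t$ are positive, we have $h_t$-coordinate$(J_s(x))\ne0$ exactly when $t\subseteq x$ and $\abs{t}\ge\abs{s}$. Moreover, the pair formed by the index $n=\lfloor x(\ell)/2\rfloor$ and the sign $(-1)^{x(\ell)}$ determines $x(\ell)$ uniquely.

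\emph{Step 2: passing coordinatewise to the limit.} Norm convergence implies convergence in each coordinate. We then build $x$ by induction on levels.

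Suppose $t\supseteq s$ with $x_m\restr\abs{t}=t$ for all large $m$; initially $t=s$. Then the $h_t$-coordinates are eventually $r_t$, and the $e_{t,n}$-coordinates converge.

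The key point is the \emph{escape-to-infinity obstacle}. If the digits $x_m(\abs{t})$ were unbounded along the sequence, the mass $r_t$ would sit on $e_{t,n_m}$ with $n_m\to\infty$. Along a subsequence with distinct $n_m$, the vectors $J_s(x_m)$ have pairwise distance at least $r_t\sqrt2$ on those coordinates. This contradicts the Cauchy property.

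Hence the digits $x_m(\abs{t})$ take finitely many values. A single value must then occur eventually: otherwise two distinct values recur infinitely often, and the corresponding $e$-coordinates, which are $\pm r_t$ on distinct $(n,\text{sign})$ slots, oscillate. That contradicts coordinatewise convergence.

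More cleanly, the limit coordinate $z$ at some $e_{t,n}$ equals $\pm r_t$. The entries of $J_s(x_m)$ lie in $\{0,\pm r_t\}$ and converge, so they are eventually equal to that value. This fixes $n$ and the sign, and therefore fixes $x(\abs{t})$ for all large $m$. We extend $t$ by this digit and continue.

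\emph{Step 3: identifying the limit.} This produces $x\in[s]$ with $x_m\restr L=x\restr L$ for large $m$, for every $L$. So $x_m\to x$ in $P$, and continuity of $j$ (Lemma~\ref{lem:tailbound}) gives $J_s(x_m)\to J_s(x)$. Uniqueness of limits yields $z=J_s(x)$, so $J_s([s])$ is closed.

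The main obstacle is the Step 2 argument excluding unbounded digits. It relies on two ingredients: the lower bound $r_t>0$ at a fixed node, and the norm (not merely coordinatewise) convergence. That is precisely why the closed set sits in $B$ rather than in a product topology.
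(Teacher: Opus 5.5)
Your proof is correct and is essentially the paper's argument: once $x_m$ extends $t$, the component of $J_s(x_m)$ on the coordinates $e_{t,n}$ ($n\in\N$) is one of the $\sqrt2\,r_t$-separated vectors $\pm r_te_{t,n}$, so norm convergence forces each digit to stabilize level by level, and the resulting branch $x$ satisfies $J_s(x_m)\to J_s(x)$ (via the tail estimate of Lemma~\ref{lem:tailbound} in the paper, via continuity of $j$ in yours). The paper gets stabilization in one step by projecting onto the closed span of $\{e_{t,n}:n\in\N\}$; this projection also justifies your ``more cleanly'' remark on its own: the projection of $z$ has norm $r_t$, so some $e_{t,n}$-coordinate of $z$ is nonzero, which coordinatewise convergence alone would not give.
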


\begin{proof}
Suppose \((J_s(x_m))_m\), with \(x_m\in[s]\), is a Cauchy
sequence. Project onto the closed coordinate subspace generated
by \(\{e_{s,n}:n\in\N\}\).
The projected vectors belong to
\[
 \{r_se_{s,n},-r_se_{s,n}:n\in\N\}.
\]
Any two distinct vectors in this set have distance at least
\(\sqrt{2}\,r_s\). Since the projections are Cauchy, they are
eventually constant. The coordinate index and sign determine
\(x_m(\abs{s})\) uniquely: index \(n\) with positive sign gives
digit \(2n\), and index \(n\) with negative sign gives digit
\(2n+1\).

After this digit stabilizes, subtract the fixed term attached to
\(s\) and repeat the argument at the next node. Inductively,
every digit eventually stabilizes, defining a branch \(x\in[s]\).
For each longer prefix \(t\) of \(x\), all sufficiently late
\(x_m\) belong to \([t]\). Their fixed initial terms agree with
those of \(x\), and Lemma~\ref{lem:tailbound} gives
\[
 \norm{J_s(x_m)-J_s(x)}<4r_t
\]
for all such \(m\). Since \(r_t\le4^{-\abs{t}}\), the sequence
converges to \(J_s(x)\).
In particular every convergent sequence in \(J_s([s])\) has
its limit in that set, proving closedness.
\end{proof}

\begin{proposition}\label{prop:fsigma}
The subspace \(H\) is \(F_\sigma\) in \(B\).
\end{proposition}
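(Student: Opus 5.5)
We will write $H$ as a countable union of closed sets of the form
\[
 K_{N,M,\bar s}=\Bigl\{\textstyle\sum_{i<N}c_i\,j(x_i):\ \abs{c_i}\le M,\ x_i\in[s_i]\Bigr\},
\]
indexed by $N,M\in\N$ and finite tuples $\bar s=(s_0,\dots,s_{N-1})$ of nodes. Every element of $H$ is a finite combination $\sum_{i<N}c_ij(x_i)$, so it lies in such a set (take $s_i=\varnothing$). There are only countably many index choices. It therefore suffices to show that each set $K_N^M=\{\sum_{i<N}c_ij(x_i):\abs{c_i}\le M,\ x_i\in P\}$ is closed in $B$.

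The natural route is continuity plus compactness. The map $(c,x)\mapsto\sum c_ij(x_i)$ is continuous on $[-M,M]^N\times P^N$ by Lemma~\ref{lem:tailbound}. However, $P$ is not compact, so we cannot simply take images of compact sets. Instead we imitate the proof of Lemma~\ref{lem:closedtails}.

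Take a convergent sequence $y_m=\sum_{i<N}c_i^m j(x_i^m)\to y$. By passing to subsequences we may assume $c_i^m\to c_i$. We then stabilize the branches level by level.

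\textbf{Root level.} Project onto the coordinate subspace spanned by $\{e_{\varnothing,n}:n\in\N\}$. The projection of $y_m$ is $\sum_i c_i^m(\pm1)e_{\varnothing,\lfloor x_i^m(0)/2\rfloor}$. For each $i$, either $x_i^m(0)$ takes some value infinitely often, and we pass to a subsequence where it is constant, or $x_i^m(0)\to\infty$. In the divergent case the coordinate index escapes to infinity.

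\textbf{Handling divergent digits.} Convergence of the projections in $\ell_2$ forces the net coefficient on escaping coordinates to tend to $0$. After grouping indices $i$ that share the same escaping coordinate, the grouped coefficients must tend to $0$. To make the groups stable, we pass to further subsequences so that the coincidence pattern among the indices $\lfloor x_i^m(0)/2\rfloor$ and the signs is eventually constant. A group with zero limiting total coefficient should be discardable. The point is that the tails $J_{x\restr 1}(x_i^m)$ attached to escaping nodes $\langle k\rangle$ have norm less than $2r_{\langle k\rangle}=2\cdot2^{-(k+2)}\to0$. Hence the entire contribution of such a group converges to $(\sum_{\text{group}}c_i)\,r_\varnothing h_\varnothing$. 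That limit is a multiple of $h_\varnothing$ with coefficient $\sum c_i=0$ in the limit, or is absorbed into the $h_\varnothing$ term.

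\textbf{Representing the limit.} The escaping indices contribute only a vanishing error plus a multiple of $h_\varnothing$. Any such multiple $a\,h_\varnothing$ can be rewritten using branches, for instance as $a\,r_\varnothing^{-1}$ times the difference of $j(x)$ and a tail, but this needs care. Cleaner is to note that a group whose coefficient sum tends to $0$ contributes $o(1)$ outright.

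\textbf{Iteration.} Recursing down levels, with a diagonal subsequence, each surviving index $i$ gets a limit branch $x_i$. Each escaping group, with nonzero limiting coefficient, is impossible by the $\ell_2$ convergence of the $e$-projections. Each escaping group with vanishing total coefficient contributes terms whose norm tends to zero. This uses the tail bound $\norm{J_t(x)}<2r_t$ together with the fact that the members of the group share the prefix up to the escape level, so their common prefix vectors cancel with total coefficient $\to0$. Finally $y=\sum_{\text{surviving }i}c_ij(x_i)$, which has at most $N$ terms with $\abs{c_i}\le M$, so $y\in K_N^M$.

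\textbf{Main obstacle.} The hardest step is the bookkeeping of cancellations among the $N$ branches that collide on the same coordinates. Distinct indices can share nodes and have opposite coefficients, so the $e$-projection argument only controls sums over groups. The plan is to stabilize the full "combinatorial type" (the tree of coincidences among the $x_i^m\restr L$, with signs) along a diagonal subsequence. Then, at every node where some group escapes, we argue that either the total coefficient of the group tends to $0$, in which case its contribution is $O(\abs{\sum c}\cdot\norm{v}+r_t)$ and so vanishes, or the $\ell_2$ limit is contradicted.
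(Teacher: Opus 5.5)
\textbf{There is a genuine gap: the sets $K_N^M$ are not closed once $N\ge2$, so your reduction fails at its first step.} No subsequence bookkeeping can prove they are closed. Take the branches of Proposition~\ref{prop:escape} with $s=\varnothing$: $x_n$ has first digit $2n$, $y_n$ has first digit $2n+1$, and all later digits are zero. Since $r_\varnothing=1$ and both first digits point to the same coordinate $e_{\varnothing,n}$ with opposite signs,
\[
 j(x_n)+j(y_n)=2h_\varnothing+J_{\varnothing\cat(2n)}(x_n)+J_{\varnothing\cat(2n+1)}(y_n)\longrightarrow 2h_\varnothing .
\]
Here the tails have norm below $2r_{\varnothing\cat(2n)}+2r_{\varnothing\cat(2n+1)}\to0$. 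These sums lie in $K_2^1$. The limit $2h_\varnothing$, however, is a nonzero vector with finite coordinate support, so by Lemma~\ref{lem:finite} it is not even in $H$. This escape phenomenon is exactly what the rest of the paper exploits, so it cannot be argued away.

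\textbf{The faulty step is your treatment of escaping groups.} Convergence of the $e$-projections only forces the \emph{signed} sum $\sum_i(-1)^{x_i^m(\ell)}c_i^m$ over a colliding group to tend to $0$, because digits $2n$ and $2n+1$ share the coordinate $e_{t,n}$ with opposite signs. The $h$-coordinates and the shared prefix vector see the \emph{unsigned} sum $\sum_ic_i^m$. A group can therefore cancel on the $e_{t,\cdot}$ coordinates while leaving the contribution $(\sum_ic_i)(v_t+r_th_t)$ in the limit. In the example the signed sum is $1-1=0$, while the unsigned sum is $2$.

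Your sentence giving the limiting $h_\varnothing$-coefficient as $\sum c_i=0$ conflates these two sums. The proposed remedy of rewriting $a\,h_\varnothing$ using branches is impossible for $a\ne0$, again by Lemma~\ref{lem:finite}.

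\textbf{The missing idea is to choose closed pieces in which collisions cannot occur at all.} Let your tuple $\bar s$ consist of \emph{distinct} nodes of one common length $L$, require $x_i\in[s_i]$, and impose $1/k\le\abs{c_i}\le k$. Then:
\begin{itemize}
\item the $h_{s_i}$-coordinate recovers $c_ir_{s_i}$;
\item the projection onto coordinates attached to nodes extending $s_i$ recovers $c_iJ_{s_i}(x_i)$;
\item dividing by the limiting coefficient and applying Lemma~\ref{lem:closedtails} produces the limit branch.
\end{itemize}
Every nonzero element of $H$ lies in such a piece after you merge repeated branches and take $L$ large enough to separate the distinct ones. This is the paper's decomposition of $H$ into the sets $C(S,k)$.
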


\begin{proof}
For an integer \(L\ge0\), a finite nonempty set \(S\subseteq\N^L\),
and an integer \(k\ge1\), define
\[
 C(S,k)=
 \left\{\sum_{s\in S}a_sj(x_s):
       x_s\in[s],\quad 1/k\le\abs{a_s}\le k\ (s\in S)\right\}.
\]
We first show that each \(C(S,k)\) is closed.
Suppose
\[
 b_m=\sum_{s\in S}a_{m,s}j(x_{m,s})\longrightarrow b,
 \qquad x_{m,s}\in[s],\quad 1/k\le\abs{a_{m,s}}\le k.
\]
The nodes in \(S\) are distinct and have the same length.
Only the branch extending \(s\) contributes to the \(h_s\)
coordinate. Hence
\[
 \langle b_m,h_s\rangle=r_sa_{m,s},
 \qquad
 a_{m,s}\longrightarrow a_s=\frac{\langle b,h_s\rangle}{r_s}.
\]
The limit satisfies \(1/k\le\abs{a_s}\le k\).

Let \(P_s\) be the orthogonal projection onto the closed coordinate
subspace generated by all basis vectors attached to nodes extending
\(s\). Distinct nodes in \(S\) have disjoint descendant coordinate
subspaces. The ancestors of each node occur only in its fixed
prefix vector. It follows that
\[
 P_sb_m=a_{m,s}J_s(x_{m,s}).
\]
The left side converges to \(P_sb\). Dividing by \(a_{m,s}\),
which converges to a nonzero number, gives
\[
 J_s(x_{m,s})\longrightarrow \frac{P_sb}{a_s}.
\]
Lemma~\ref{lem:closedtails} supplies \(x_s\in[s]\) for which
the limit is \(J_s(x_s)\).
Now pass to the limit in the full finite sum
\[
 b_m=\sum_{s\in S}a_{m,s}\bigl(v_s+J_s(x_{m,s})\bigr).
\]
This yields
\(b=\sum_{s\in S}a_s(v_s+J_s(x_s))
  =\sum_{s\in S}a_sj(x_s)\in C(S,k)\), proving closedness.

There are countably many choices of \(L,S,k\). Every finite linear
combination of branch vectors can be simplified by combining repeated
branches and removing zero coefficients. If the resulting combination
is nonempty, choose \(L\) so that the remaining branches have distinct
prefixes of length \(L\). Choose \(k\) large enough that every
coefficient has absolute value between \(1/k\) and \(k\).
The vector then belongs to the corresponding \(C(S,k)\).
Therefore
\[
 H=\{0\}\cup
   \bigcup_{\substack{L\ge0,\ S\subseteq\N^L\\
                     S\text{ finite, nonempty}}}
   \ \bigcup_{k\ge1}C(S,k),
\]
a countable union of closed sets.
\end{proof}

\subsection{Limits of paired branches outside the span}

\begin{lemma}\label{lem:finite}
The only vector of \(H\) with finite coordinate support is \(0\).
Moreover, the family \(j(P)\) is linearly independent.
\end{lemma}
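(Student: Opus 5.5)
We prove both assertions at once by showing: if \(b=\sum_{i=1}^m a_ij(x_i)\) with the \(x_i\) pairwise distinct and all \(a_i\neq0\) (\(m\ge1\)), then \(b\) has infinitely many nonzero coordinates. The first assertion follows because every vector of \(H\) can be written either as \(0\) or in this reduced form, after merging repeated branches and discarding zero coefficients as in the proof of Proposition~\ref{prop:fsigma}. Linear independence also follows, since a nontrivial vanishing combination would reduce to such a form representing \(0\), which has finite (empty) support.

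Choose \(L\) so large that the prefixes \(s_i=x_i\restr L\) are pairwise distinct. For each level \(\ell\ge L\), consider the coordinate \(h_{x_1\restr\ell}\). Since \(x_1\restr\ell\) extends \(s_1\), it does not extend \(s_i\) for \(i\neq1\). By the support statement in Lemma~\ref{lem:tailbound}, the vector \(J_{s_i}(x_i)\) then has zero \(h_{x_1\restr\ell}\)-coordinate for \(i\ne1\).

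The prefix vectors \(v_{s_i}\) need more care, because they live on coordinates attached to nodes of length \(<L\), and these may be shared. This causes no trouble: the coordinate \(h_{x_1\restr\ell}\) is attached to a node of length \(\ell\ge L\), and no prefix vector \(v_{s_i}\) involves it. Therefore
\[
 \langle b,h_{x_1\restr\ell}\rangle=a_1\langle J_{s_1}(x_1),h_{x_1\restr\ell}\rangle=a_1r_{x_1\restr\ell}\neq0,
\]
since the term of \(j(x_1)\) at level \(\ell\) contributes exactly \(r_{x_1\restr\ell}h_{x_1\restr\ell}\), and the terms at other levels have disjoint supports.

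The nodes \(x_1\restr\ell\) for \(\ell\ge L\) are distinct, so these infinitely many coordinates of \(b\) are nonzero. Hence \(b\) does not have finite coordinate support.

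The main point to watch is the bookkeeping that the \(h\)-coordinate at \(x_1\restr\ell\) receives contributions only from branches passing through that node. This is where the choice of \(L\) separating the branches is essential. Everything else is immediate from the explicit formula \eqref{eq:branch}.
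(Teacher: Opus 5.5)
Your proof is correct and follows the paper's argument: separate the finitely many branches at a level $L$, then observe that for every $\ell\ge L$ the $h_{x_1\restr\ell}$-coordinate of the combination is exactly $a_1r_{x_1\restr\ell}\neq0$, which gives infinitely many nonzero coordinates. Your explicit check that neither the prefix vectors nor the tails $J_{s_i}(x_i)$ with $i\ne1$ touch these coordinates spells out what the paper leaves implicit.
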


\begin{proof}
Consider a nonempty finite combination
\(b=\sum_{i=1}^Na_i j(x_i)\) with distinct branches \(x_i\) and
nonzero coefficients \(a_i\).
For all sufficiently large \(L\), the prefixes
\(x_i\restr L\) are pairwise distinct. Thus the
\(h_{x_1\restr L}\)-coordinate of \(b\) is exactly
\(a_1r_{x_1\restr L}\ne0\).
These are infinitely many distinct coordinates as \(L\) varies.
Every such combination therefore has infinite coordinate support
and is nonzero. Both assertions follow.
\end{proof}

\begin{proposition}\label{prop:escape}
For every \(s\in T\),
\[
 \overline{j([s])+j([s])}\not\subseteq H.
\]
\end{proposition}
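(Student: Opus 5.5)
Fix $s\in T$ and write $L=\abs{s}$. The plan is to build sequences $t_m,u_m\in[s]$ with $j(t_m)+j(u_m)\to z$, where $z$ is a vector of finite coordinate support outside the finite-dimensional pieces of $H$. Lemma~\ref{lem:finite} says the only finitely supported vector of $H$ is $0$, so it will suffice to make $z$ nonzero with finite support.

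The natural construction uses the sign alternation at a single node. Take $t_m=s\cat(2m)\cat\bar 0$ and $u_m=s\cat(2m+1)\cat\bar 0$, where $\bar 0$ denotes the all-zero continuation. Both branches pass through the same node $s$ and have the same digit index $\lfloor\cdot/2\rfloor=m$ there, but opposite signs. The node-$s$ term therefore contributes $2r_sh_s$, and the $e_{s,m}$ coordinates cancel. Above level $L$, both branches share the prefix vector, so together they contribute $2v_s$. Below level $L$, the two branches pass through the distinct nodes $s\cat(2m)$ and $s\cat(2m+1)$. By Lemma~\ref{lem:tailbound}, their tails $J_{s\cat(2m)}$ and $J_{s\cat(2m+1)}$ have norms less than $2r_{s\cat(2m)}$ and $2r_{s\cat(2m+1)}$. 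These weights tend to $0$ as $m\to\infty$, by \eqref{eq:weights}, since $r_{s\cat k}=2^{-(k+2)}r_s$. Hence
\[
 j(t_m)+j(u_m)\longrightarrow z:=2v_s+2r_sh_s,
\]
and $z$ lies in $\overline{j([s])+j([s])}$.

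It remains to check that $z\neq0$ and that $z$ has finite coordinate support. The support of $z$ consists of the $h$- and $e$-coordinates at the finitely many nodes $s\restr\ell$ with $\ell\le L$, so it is finite. The $h_s$-coordinate of $z$ equals $2r_s\neq0$. Lemma~\ref{lem:finite} then gives $z\notin H$, which proves the proposition.

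The argument has no real obstacle. The one point needing care is that the tail estimate in Lemma~\ref{lem:tailbound} applies to the nodes $s\cat k$ themselves: the bound $\norm{J_{s\cat k}(x)}<2r_{s\cat k}\to0$ is what drives the convergence. We should also record that the $e_{s,m}$ terms cancel exactly rather than merely becoming small.
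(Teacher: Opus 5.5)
Your proposal is correct and is essentially the paper's own proof. It uses the same pairs $s\cat(2m)\cat 0^\infty$ and $s\cat(2m+1)\cat 0^\infty$, the exact cancellation of the $e_{s,m}$ terms, and the tail bound of Lemma~\ref{lem:tailbound} to obtain the limit $2v_s+2r_sh_s$, which is then excluded from $H$ by Lemma~\ref{lem:finite}.
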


\begin{proof}
Let \(x_n=s\cat(2n)\cat 0^\infty\) and
\(y_n=s\cat(2n+1)\cat 0^\infty\), where the notation means
that all subsequent digits are zero.
The two vectors share the prefix vector \(v_s\).
At node \(s\), their \(e_{s,n}\)-coordinates have opposite signs.
Consequently
\[
 \begin{split}
 j(x_n)+j(y_n)
  ={}&2v_s+2r_sh_s\\
     &{}+J_{s\cat(2n)}(x_n)
       +J_{s\cat(2n+1)}(y_n).
 \end{split}
\]
The last two terms tend to zero in norm, since their norms are
bounded by twice their respective weights and
\[
 r_{s\cat(2n)}=2^{-(2n+2)}r_s\longrightarrow0,
 \qquad
 r_{s\cat(2n+1)}=2^{-(2n+3)}r_s\longrightarrow0.
\]
Thus \(j(x_n)+j(y_n)\to z_s=2v_s+2r_sh_s\).
The vector \(z_s\) has finite coordinate support and its
\(h_s\)-coordinate is \(2r_s>0\), so it is nonzero.
Lemma~\ref{lem:finite} implies \(z_s\notin H\).
Since \(x_n,y_n\in[s]\), this proves the assertion.
\end{proof}

\section{Countable tensor families with $F_\sigma$ spans}\label{sec:tensors}
The next lemma first handles finite algebraic tensor products.
The second supplies the countable construction needed at limit ordinals.
The notation $\widehat\otimes$ denotes a completed Hilbert tensor
product; $\otimes_{\mathrm{alg}}$ denotes finite sums of elementary
tensors, with no completion.

\begin{lemma}[Finite algebraic tensor products]\label{lem:algebraic-tensor-fsigma}
Let $U$ and $V$ be separable real Hilbert spaces, and let $H\subseteq U$
and $K\subseteq V$ be $F_\sigma$ linear subspaces. Then the algebraic
tensor product $H\otimes_{\mathrm{alg}}K$, regarded as a linear subspace
of the Hilbert tensor product $U\widehat\otimes V$, is $F_\sigma$.
Consequently the same assertion holds for any finite number of factors.
\end{lemma}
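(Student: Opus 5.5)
We will write $H=\bigcup_k H_k$ and $K=\bigcup_k K_k$, where the $H_k$, $K_k$ are closed and increasing. We then cover $H\otimes_{\mathrm{alg}}K$ by countably many closed sets indexed by a rank $r$ and a level $k$. The identification with Hilbert--Schmidt operators from the preliminaries is convenient here: an element of $U\widehat\otimes V$ is an operator $T:V\to U$, and $H\otimes_{\mathrm{alg}}K$ consists exactly of the finite-rank operators $T$ with $\operatorname{ran}T\subseteq H$ and $\operatorname{ran}T^*\subseteq K$. Indeed, if $T$ has finite rank with range in $H$ and coimage in $K$, choose a basis $u_1,\dots,u_r$ of $\operatorname{ran}T\subseteq H$. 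Then $T=\sum_i u_i\otimes w_i$ with the $w_i$ spanning $\operatorname{ran}T^*\subseteq K$. The converse is immediate, since $\sum u_i\otimes w_i$ has range in $\spanR\{u_i\}$ and coimage in $\spanR\{w_i\}$.

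The first attempt is to take the sets $\{T:\operatorname{rank}T\le r,\ Tf_i\in H_k,\ T^*e_j\in K_k\ \forall i,j\}$. Rank at most $r$ is closed, as noted in the preliminaries. The problem is that $H_k$ is only a closed set, not a subspace, so the range being contained in $H$ is not detected by the columns $Tf_i$ lying in $H_k$. The columns do lie in $\operatorname{ran}T\subseteq H$, and countably many of them lie in $H$, but not necessarily in a single $H_k$. To get uniformity we parametrize instead. For each $r$ and $k$, let
\[
C_{r,k}=\Bigl\{\sum_{i=1}^r u_i\otimes w_i:\ u_i\in H_k\cap \bar B(0,k),\ w_i\in K_k\cap\bar B(0,k)\Bigr\}.
\]
These sets cover $H\otimes_{\mathrm{alg}}K$. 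Closedness fails, however, unless we have compactness or control over degenerations: a bounded sequence of $u_i$ converges only weakly.

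The main obstacle is therefore closedness, and the fix is to normalize the representation. Every rank-$r$ element can be written as $T=\sum_{i\le r}u_i\otimes w_i$ with the $u_i$ forming a "nicely conditioned" basis of $\operatorname{ran}T$, for instance with the Gram matrix of the $u_i$ between $k^{-1}I$ and $kI$, and similarly for the $w_i$. Then $u_i$ and $w_i$ can be recovered continuously from $T$ in a way that allows limits. To do this, fix finitely many coordinates: choose indices $j_1,\dots,j_r$ and $i_1,\dots,i_r$ such that the $r\times r$ minor $M=(t_{j_a i_b})$ satisfies $|\det M|\ge 1/k$. Then $T$ equals its columns $Tf_{i_b}$, composed with $M^{-1}$, composed with its rows $T^*e_{j_a}$ (the rank factorization through a nonsingular minor). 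Accordingly we define
\[
D_{r,k,\bar j,\bar i}=\{T:\operatorname{rank}T\le r,\ |\det M_{\bar j,\bar i}(T)|\ge 1/k,\ Tf_{i_b}\in H_k,\ T^*e_{j_a}\in K_k\ (a,b\le r)\}.
\]
Only finitely many columns and rows are constrained, and all of these conditions are closed, using continuity of $T\mapsto Tf_i$ and $T\mapsto T^*e_j$. Every element of $D$ lies in $H\otimes_{\mathrm{alg}}K$: rank is exactly $r$ because of the nonsingular minor, so $\operatorname{ran}T$ is spanned by the $r$ chosen columns, which lie in $H$, and $\operatorname{ran}T^*$ is spanned by the chosen rows, which lie in $K$. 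Conversely, a rank-$r$ element of $H\otimes_{\mathrm{alg}}K$ has some nonsingular $r\times r$ minor. Its chosen columns lie in $H$ and its chosen rows in $K$, so they lie in some common $H_k$ and $K_k$. Taking the union over $r,k,\bar j,\bar i$, together with $\{0\}$, gives $H\otimes_{\mathrm{alg}}K$ as a countable union of closed sets.

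For finitely many factors we induct on the number of factors. By associativity of finite Hilbert tensor products, $H_1\otimes_{\mathrm{alg}}\cdots\otimes_{\mathrm{alg}}H_m$ equals $(H_1\otimes_{\mathrm{alg}}\cdots\otimes_{\mathrm{alg}}H_{m-1})\otimes_{\mathrm{alg}}H_m$ inside $(U_1\widehat\otimes\cdots\widehat\otimes U_{m-1})\widehat\otimes U_m$. The inductive hypothesis makes the first factor an $F_\sigma$ subspace of a separable Hilbert space, so the two-factor case applies.
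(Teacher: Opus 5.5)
Your final argument is correct and is essentially the paper's proof: the closed sets $D_{r,k,\bar j,\bar i}$ are the paper's closed sets. They impose rank at most $r$, a coordinate $r\times r$ minor with $|\det|\ge 1/k$, and finitely many selected columns and rows in closed pieces of $H$ and $K$; your single increasing level $k$ just replaces the paper's separate indices $a_t,b_s$, and the finite-factor case is the same iteration. The exploratory sets $C_{r,k}$ and the Gram-matrix normalization play no role in the proof and could be deleted.
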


\begin{proof}
Identify $U\widehat\otimes V$ with the Hilbert space of Hilbert--Schmidt
operators $T:V\to U$, identifying $u\otimes v$ with the operator
$x\mapsto\langle x,v\rangle u$. Fix orthonormal bases $(e_j)$ of $U$
and $(f_i)$ of $V$, and closed covers
\[
 H=\bigcup_{a\geq 1} H_a,\qquad K=\bigcup_{b\geq 1}K_b.
\]
The bases and their index sets may be finite. For a positive integer
$r$, choose $r$ distinct column indices $I=(i_1,\ldots,i_r)$ and $r$
distinct row indices $J=(j_1,\ldots,j_r)$. Write
\[
 M_{J,I}(T)=\bigl(\langle Tf_{i_t},e_{j_s}\rangle\bigr)_{s,t=1}^r.
\]
Fix positive integers $k,a_1,\ldots,a_r,b_1,\ldots,b_r$ and impose the
conditions
\begin{gather*}
 \operatorname{rank}(T)\leq r,\qquad
 |\det M_{J,I}(T)|\geq k^{-1},\\
 Tf_{i_t}\in H_{a_t}\quad(1\leq t\leq r),\qquad
 T^*e_{j_s}\in K_{b_s}\quad(1\leq s\leq r).
\end{gather*}
These conditions define a closed subset of the Hilbert--Schmidt
operator space. Indeed, rank at most $r$ is equivalent to the vanishing
of every $(r+1)\times(r+1)$ coordinate minor, and all the displayed
coordinate maps are continuous.

Every operator in this closed set belongs to
$H\otimes_{\mathrm{alg}}K$. Indeed, let $C:\mathbb R^r\to U$ have
columns $Tf_{i_t}$, and let $R:V\to\mathbb R^r$ have coordinates
$x\mapsto\langle Tx,e_{j_s}\rangle$. The invertibility of $M=M_{J,I}(T)$
and the rank bound imply
\[
 T=CM^{-1}R.
\]
To verify this equality, the selected columns are linearly independent
and hence span the range of $T$; the selected rows determine their
coefficients by multiplication by $M$. The equality expresses $T$ as
a finite sum of tensors of selected columns from $H$ and selected rows
from $K$.

Conversely, if $T\in H\otimes_{\mathrm{alg}}K$ has positive rank $r$,
then every column $Tf_i$ lies in $H$ and every row $T^*e_j$ lies in $K$.
Some $r\times r$ coordinate minor is invertible. Choose this minor, a
positive lower bound $k^{-1}$ for its absolute determinant, and closed
cover members containing its finitely many selected rows and columns.
Thus $T$ belongs to one of the closed sets above. There are only
countably many choices of all this data. Adding the closed singleton
$\{0\}$ gives a countable closed cover of
$H\otimes_{\mathrm{alg}}K$. Iteration proves the finite-factor assertion.
\end{proof}

\begin{lemma}[A countable tensor construction with an $F_\sigma$ span]
\label{lem:fock-fsigma}
For each integer $n\geq1$, let $B_n$ be a separable real Hilbert space,
let $P_n$ be a nonempty Polish space, and let $j_n:P_n\to B_n$ be a
bounded continuous closed embedding. Suppose that
\begin{enumerate}
\item $H_n=\operatorname{span}_{\mathbb R}j_n(P_n)$ is $F_\sigma$ in $B_n$;
\item there is a continuous linear functional $\rho_n:B_n\to\mathbb R$
with $\rho_n(j_n(t))=1$ for all $t\in P_n$;
\item there is a countable family $\mathcal L_n\subseteq B_n^*$ such that
every $\ell\circ j_n$, for $\ell\in\mathcal L_n$, is locally constant,
and these functions separate the points of $P_n$.
\end{enumerate}
Choose $M_n\geq\sup_{t\in P_n}\|j_n(t)\|$ and put
$\beta_n=2^{-n}/(1+M_n)$. For a finite subset $F\subseteq\mathbb N_{>0}$,
let $B_F=\widehat\bigotimes_{n\in F}B_n$, with $B_\varnothing=\mathbb R$,
and form the Hilbert direct sum
\[
 \mathcal B=\bigoplus_{F\subseteq\mathbb N_{>0}\text{ finite}} B_F.
\]
Define $Q=\prod_{n\geq1}P_n$ and $J:Q\to\mathcal B$ by
\[
 J(q)_F=\left(\prod_{n\in F}\beta_n\right)
             \bigotimes_{n\in F}j_n(q_n).
\]
Then $J$ is a bounded continuous closed embedding, its real linear span
$\mathcal H=\operatorname{span}_{\mathbb R}J(Q)$ is $F_\sigma$, and $J$
again satisfies the mass-functional and locally constant separating
functional conditions in items \textup{(2)} and \textup{(3)}.
\end{lemma}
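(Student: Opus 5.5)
The plan is to verify the easy structural properties of $J$ component by component, and then to prove the $F_\sigma$ assertion by imitating the closed-cover argument of Proposition~\ref{prop:fsigma}. In that argument, the role of the node coordinates $h_s$ is played by the locally constant separating functionals of item~(3).

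\emph{Boundedness and continuity.} For every $q$,
\[
 \|J(q)\|^2=\sum_F\prod_{n\in F}\beta_n^2\|j_n(q_n)\|^2\le\sum_F\prod_{n\in F}4^{-n}=\prod_{n\geq1}(1+4^{-n})<\infty .
\]
The same bound applied to the tail sum over sets $F\not\subseteq\{1,\dots,N\}$ is small uniformly in $q$. The finitely many remaining components $F\subseteq\{1,\dots,N\}$ depend continuously on finitely many coordinates $q_n$, because the elementary tensor map is continuous. Uniform approximation then gives continuity of $J$.

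\emph{Closed embedding.} The singleton component is $J(q)_{\{n\}}=\beta_nj_n(q_n)$. Suppose $J(q^k)\to b$. Then $j_n(q^k_n)\to b_{\{n\}}/\beta_n$ for each $n$. Since each $j_n$ is a closed embedding, there is $q_n\in P_n$ with $q^k_n\to q_n$, so $q^k\to q$ in $Q$, and $b=J(q)$ by continuity. Injectivity and continuity of the inverse follow the same way.

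\emph{Items (2) and (3) for $J$.} The mass functional is projection onto the $\varnothing$ component, where $J(q)_\varnothing=1$. For (3), take $\mathcal L=\{\ell\circ\pi_{\{n\}}:n\geq1,\ \ell\in\mathcal L_n\}$, where $\pi_{\{n\}}$ is projection onto the $\{n\}$ component. Then $\ell\circ\pi_{\{n\}}\circ J(q)=\beta_n\,\ell(j_n(q_n))$. This function is locally constant in $q_n$, hence locally constant on $Q$. Two distinct points of $Q$ differ in some coordinate $n$, and there some $\ell\in\mathcal L_n$ separates them.

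\emph{The span is $F_\sigma$; I expect this to be the main obstacle.} Intersecting the $F_\sigma$ conditions $b_F\in\bigotimes_{\mathrm{alg}}H_n$ of Lemma~\ref{lem:algebraic-tensor-fsigma} over all $F$ is useless, since a countable intersection of $F_\sigma$ sets need not be $F_\sigma$. Instead I will cover $\mathcal H\setminus\{0\}$ by countably many closed sets of the form
\[
 C(N,k,\Phi)=\Bigl\{\sum_{i\le N}a_iJ(q_i):1/k\le|a_i|\le k\Bigr\}.
\]
Here $\Phi$ is finite data: finitely many pairs $(n,\ell)$ with $\ell\in\mathcal L_n$, together with prescribed values $\ell(j_n(q_{i,n}))=c_{i,n,\ell}$. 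The data are chosen so that the value vectors of different $i$ are pairwise distinct. Each $\ell\circ j_n$ is locally constant with countable range, so there are only countably many such data. Every nonzero element of $\mathcal H$ lies in one of these sets: combine repeated points, drop zero coefficients, and choose finitely many functionals separating the remaining finitely many points.

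\emph{Closedness of $C(N,k,\Phi)$.} Let $b_m=\sum_ia_{m,i}J(q_{m,i})\to b$. Apply to $b$ contractions of the form $\ell_1\otimes\cdots\otimes\ell_p$ on components $F=\{n_1,\dots,n_p\}$ drawn from the finite data, with $\rho_n$ on any remaining factors. Such a contraction returns $\sum_ia_{m,i}\lambda_i$, where the weights $\lambda_i$ are products of the fixed prescribed values and of the fixed constants $\beta_n$. Because the value vectors are distinct, the resulting finite linear system in $(a_{m,i})_i$ has an invertible coefficient matrix, in the manner of a Vandermonde or character-separation argument. The constant term comes from the $\varnothing$ component. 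Solving the system, the $a_{m,i}$ converge to limits $a_i$, and these still satisfy $1/k\le|a_i|\le k$.

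Next, for each $n$ consider the components $F\ni n$. Contract the factors other than $n$ by the same separating products, and use $\rho$ on the factor $n$ where needed. This expresses the vectors $a_{m,i}j_n(q_{m,i,n})$ as fixed invertible linear combinations of convergent vectors, so each $j_n(q_{m,i,n})$ converges. Closedness of $j_n(P_n)$, that is, the closed-embedding hypothesis, yields limit points $q_{i,n}$. Continuity of $J$ then gives $b=\sum_ia_iJ(q_i)$, and local constancy preserves the prescribed values, so $b\in C(N,k,\Phi)$.

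The delicate point is arranging the invertibility of these linear systems uniformly in $m$. That uniformity is exactly what fixing the exact functional values in $\Phi$ is meant to buy.
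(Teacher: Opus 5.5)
Your verification of boundedness, continuity, the closed-embedding property, and items (2)--(3) is correct and essentially matches the paper's. The $F_\sigma$ part, however, has a genuine gap. The sets $C(N,k,\Phi)$ are not closed in general, and the claim that each $j_n(q_{m,i,n})$ converges is false.

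To see why, suppose two labels agree at every coordinate except $n$, with common tail $q'_m$. Then $b_m=\bigl(a_{m,1}(1,\beta_nj_n(q_{m,1,n}))+a_{m,2}(1,\beta_nj_n(q_{m,2,n}))\bigr)\otimes J_{\{n\}^c}(q'_m)$, so $b_m$ determines only this single combination (and the tail). No contraction on the other factors can separate the two labels. Prescribing finitely many locally constant feature values does not stop such combinations from escaping $H_n$.

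Here is a concrete instance. Let $P_1=P_*\times P_*$ and $j_1(x,y)=(j_*(x),j_*(y))$; this is Lemma~\ref{lem:successor-family} applied to the branch family, and it satisfies (1)--(3).
\begin{itemize}
\item Let $u$ be the length-one sequence with entry $0$, and let $\ell(v,w)=\langle w,h_u\rangle/r_u$. Then $\ell(j_1(x,y))$ is the indicator of $[u]$ evaluated at $y$.
\item Let $\Phi$ prescribe the $\ell$-values $1$ and $0$ at coordinate $1$ for $i=1,2$.
\item Fix $\sigma\in[u]$, $\tau\notin[u]$ and a common tail $q'\in\prod_{n\geq2}P_n$, and put $x_m=(2m)\cat0^\infty$, $y_m=(2m+1)\cat0^\infty$.
\end{itemize}
Let $b_m$ be the sum of the two $J$-atoms with first coordinates $(x_m,\sigma)$ and $(y_m,\tau)$ and tail $q'$. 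Then $b_m\in C(2,1,\Phi)$, and by Proposition~\ref{prop:escape} with $s=\varnothing$,
\[
 b_m\longrightarrow\bigl(2,\ \beta_1\,(2h_\varnothing,\ j_*(\sigma)+j_*(\tau))\bigr)\otimes J_{\{1\}^c}(q').
\]
Every element of $\mathcal H$ has its singleton component in $\beta_1H_1\subseteq\beta_1(H_*\times H_*)$. This limit has singleton component $\beta_1(2h_\varnothing,\dots)$, and $2h_\varnothing\notin H_*$ by Lemma~\ref{lem:finite}. So the limit is not even in $\mathcal H$.

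Enlarging $\Phi$ does not help, because every feature in $\mathcal L_1$ is eventually constant along $(x_m,\sigma)$ and along $(y_m,\tau)$. Your coefficient step could be rescued by compactness of $\{1/k\le|a|\le k\}$, but the point step cannot.

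A warning sign is that your argument never uses hypothesis (1), although the conclusion forces it: with the insertion maps $T_n$ of Lemma~\ref{lem:limit-family}, $H_n=T_n^{-1}(\mathcal H)$. The missing idea is the paper's two-level chart.
\begin{itemize}
\item First partition the labels by equality outside a finite set $B$. Absorb all finite-coordinate differences into coefficient vectors $v_c$ in $V_B=\operatorname{span}_{\mathbb R}\{A_B(q_B)\}$, which is $F_\sigma$ by (1) and Lemma~\ref{lem:algebraic-tensor-fsigma}.
\item The chart conditions $v_c\in S_{B,a_c}$, $\|v_c\|\le k$, $|\lambda_c(v_c)|\ge k^{-1}$ are what rule out escaping limits like the one above.
\item The remaining tail labels differ at infinitely many coordinates, so each ordered pair can be given its own witness coordinate. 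This yields exact Kronecker-delta pinning functionals as tensor products over distinct factors.
\item Two disjoint witness sets $W_1,W_2$ ensure that every coordinate $n\notin B$ can be read off with a pinning functional that does not involve factor $n$.
\end{itemize}
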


\begin{proof}
We give the closed cover of $\mathcal H$ explicitly. First observe that
\[
 \|J(q)\|^2
 =\prod_{n\geq1}\bigl(1+\beta_n^2\|j_n(q_n)\|^2\bigr)
 \leq\prod_{n\geq1}(1+4^{-n})<\infty.
\]
The sum defining the squared norm has tails tending to zero uniformly in $q$.
Explicitly, if $C_0=\prod_{n\geq1}(1+4^{-n})$, then
\[
 \sum_{F\not\subseteq\{1,\ldots,N\}}\|J(q)_F\|^2
 \leq C_0\left(\prod_{n>N}(1+4^{-n})-1\right)\longrightarrow0.
\]
Each individual tensor coordinate is continuous, so this uniform bound
also proves continuity of $J$. If $J(q^l)$ converges, its singleton
coordinate at $\{n\}$ shows that $j_n(q_n^l)$ converges. Since $j_n$ is
a closed embedding, this limit is $j_n(q_n)$ for a unique $q_n\in P_n$,
and $q_n^l\to q_n$. Thus $q^l\to q$ in $Q$, and continuity gives that
the original vector limit is $J(q)$. This proves that $J$ is a closed
embedding. The empty tensor coordinate is a mass functional taking
value $1$ on $J(Q)$. The functionals from item~\textup{(3)}, applied to
normalized singleton coordinates, are a countable locally constant
family separating the points of $Q$.

For a finite set $S\subseteq\mathbb N_{>0}$, set
\[
 E_S=\widehat\bigotimes_{n\in S}(\mathbb R\oplus B_n),\qquad
 A_S(q_S)=\bigotimes_{n\in S}(1,\beta_nj_n(q_n)),
\]
where $E_\varnothing=\mathbb R$ and $A_\varnothing=1$. The finite tensor
expansion identifies $E_S$ with $\bigoplus_{F\subseteq S}B_F$. Write
$\pi_S:\mathcal B\to E_S$ for the projection to this whole finite
packet of coordinates. Thus $\pi_S$ includes \emph{all} components
indexed by subsets of $S$, rather than only the component indexed by
$S$ itself. For disjoint finite sets, we use the corresponding canonical
tensor identifications without further notation.

For a fixed finite $B$, let $\mathcal B_{B^c}$ be the analogous direct
sum over finite subsets of $B^c$, and write $J_{B^c}$ for the same
construction using only these indices. There is a canonical Hilbert
space identification
\[
 \mathcal B=E_B\widehat\otimes\mathcal B_{B^c},\qquad
 J(q)=A_B(q_B)\otimes J_{B^c}(q_{B^c}).
\]
Put
\[
 V_B=\operatorname{span}_{\mathbb R}
       \{A_B(q_B):q_B\in\prod_{n\in B}P_n\}.
\]
This is an $F_\sigma$ subspace of $E_B$. To see this, the span of the
one-factor family $(1,\beta_nj_n(t))$ is exactly
\[
 V_{\{n\}}=\{(\rho_n(v),\beta_nv):v\in H_n\}.
\]
The map $v\mapsto(\rho_n(v),\beta_nv)$ is a closed linear embedding of
$B_n$ into $\mathbb R\oplus B_n$, so it sends a closed cover of $H_n$
to a closed cover of $V_{\{n\}}$. Finally,
$V_B=\bigotimes_{n\in B,\mathrm{alg}}V_{\{n\}}$, and
Lemma~\ref{lem:algebraic-tensor-fsigma} applies. Fix, for each finite
$B$, a closed cover $(S_{B,a})_{a\geq1}$ of $V_B$, and a countable
orthonormal basis of $E_B$. Let $\Lambda_B$ denote its coordinate
linear functionals; every nonzero element of $E_B$ has nonzero value
under some member of $\Lambda_B$.

Here is a countable collection of functionals that will provide finite
witnesses distinguishing tail labels. Each locally constant function
$\ell\circ j_n$ has countable range: its nonempty fibers are disjoint
open sets in a second countable space. Let $\mathcal A_n$ consist of
the constant-coordinate functional $(a,v)\mapsto a$ on
$\mathbb R\oplus B_n$, together with all functionals
\[
 (a,v)\longmapsto
 \frac{\beta_n^{-1}\ell(v)-b a}{c-b},
 \qquad \ell\in\mathcal L_n,\quad
 b,c\in(\ell\circ j_n)(P_n),\quad b\ne c.
\]
This is a countable collection. On the packet $(1,\beta_nj_n(t))$,
the latter functional takes value $1$ when $\ell(j_n(t))=c$ and value
$0$ when $\ell(j_n(t))=b$. For a finite $W$, let $\mathcal A_W$ be the
countable set of tensor-product functionals
$\bigotimes_{n\in W}\psi_n$ with $\psi_n\in\mathcal A_n$; for empty
$W$ it contains the identity functional on $\mathbb R$. Their values
on $A_W(q_W)$ are locally constant.

We now specify a closed chart. Its data consist of
\begin{enumerate}
\item a finite set $B$ and an integer $m\geq1$;
\item two disjoint finite sets $W_1,W_2\subseteq B^c$;
\item for $1\leq c\leq m$ and $r\in\{1,2\}$, a functional
$\phi_c^r\in\mathcal A_{W_r}$;
\item for $1\leq c\leq m$, an index $a_c$, a functional
$\lambda_c\in\Lambda_B$, and a common positive integer $k$.
\end{enumerate}
The chart consists of all vectors
\[
 y=\sum_{c=1}^m v_c\otimes J_{B^c}(q_c),
 \qquad v_c\in E_B,\quad q_c\in\prod_{n\notin B}P_n,
\]
whose representing parameters satisfy the closed coefficient conditions
\[
 v_c\in S_{B,a_c},\qquad \|v_c\|\leq k,
 \qquad |\lambda_c(v_c)|\geq k^{-1},
\]
and the finite pinning conditions
\begin{equation}
 \phi_c^r\bigl(A_{W_r}((q_d)_{W_r})\bigr)=\delta_{cd}
 \quad(1\leq c,d\leq m,\ r=1,2).
\label{eq:fock-pinning}
\end{equation}
Here $\delta_{cd}$ is the Kronecker delta. The pinning conditions are
closed, indeed clopen, conditions on the finitely many relevant
parameters. No requirement about eventual equality or inequality of
the tails is included in a chart. All the chart data have only
countably many choices.

Each chart is contained in $\mathcal H$, because $v_c\in V_B$ is a
finite linear combination of the finite packets $A_B$, and tensoring
such a packet with $J_{B^c}(q_c)$ gives a $J$-atom. We next prove that
each chart is closed in $\mathcal B$. Suppose that $y^l$ is a convergent
sequence of chart vectors, and choose a permitted representation
$(v_c^l,q_c^l)$ of each one. Projecting to $E_B\widehat\otimes E_{W_1}$
and contracting the second factor gives
\[
 (\operatorname{id}_{E_B}\otimes\phi_c^1)\pi_{B\cup W_1}(y^l)
 =v_c^l,
\]
by \eqref{eq:fock-pinning}. Therefore $v_c^l$ converges to some $v_c$.
The coefficient conditions are closed, so $v_c$ satisfies the same
conditions and in particular $|\lambda_c(v_c)|\geq k^{-1}$.

For $n\notin B\cup W_1$, project to the packet on
$B\cup W_1\cup\{n\}$, reorder its factors canonically, and contract
with $\lambda_c$ and $\phi_c^1$. This gives
\[
 (\lambda_c\otimes\phi_c^1\otimes\operatorname{id}_{\mathbb R\oplus B_n})
 \pi_{B\cup W_1\cup\{n\}}(y^l)
 =\lambda_c(v_c^l)(1,\beta_nj_n((q_c^l)_n)).
\]
The left side converges. Dividing by the nonzero scalar
$\lambda_c(v_c^l)$ shows that the one-coordinate packet on the right
converges. If $n\in W_1$, the identical argument uses $W_2$ and
$\phi_c^2$; this is possible because $W_1\cap W_2=\varnothing$. We
have thus proved convergence of $j_n((q_c^l)_n)$ for every $n\notin B$.
The closed-embedding hypothesis supplies $(q_c)_n\in P_n$ with
$(q_c^l)_n\to(q_c)_n$. Consequently $q_c^l\to q_c$ in the product.
The finite pinning conditions persist. By continuity of $J_{B^c}$ and
of the tensor product map,
\[
 y^l\longrightarrow\sum_{c=1}^m v_c\otimes J_{B^c}(q_c),
\]
which is again in the same chart. This proves its closedness. Notice
that tail labels may become eventually equal in this limit; the proof
does not prohibit this and does not require uniqueness of a finite
atom representation.

It remains to verify that these countably many charts, together with
$\{0\}$, cover $\mathcal H$. Take a nonzero finite linear combination
of $J$-atoms. Partition its finitely many labels $q\in Q$ according to
equality outside a finite set of coordinates. For each class choose
a representative. Choose a finite $B$ containing every coordinate
where a label differs from its class representative. The factorization
of $J$ then rewrites the given vector as
\[
 y=\sum_{c=1}^m v_c\otimes J_{B^c}(q_c),
 \qquad 0\ne v_c\in V_B,
\]
after discarding zero coefficients. Distinct remaining tail labels
$q_c,q_d$ differ at infinitely many coordinates outside $B$.

For each $r\in\{1,2\}$ and each ordered pair $c\ne d$, choose a
coordinate $n(c,d,r)\notin B$ where $(q_c)_n\ne(q_d)_n$, choosing all
these finitely many coordinates distinct. This is possible because
each pair differs infinitely often. Let $W_r$ be the set of coordinates
chosen for this value of $r$. At $n(c,d,r)$ choose a functional from
$\mathcal L_n$ distinguishing the two parameters, and then choose its
normalized member of $\mathcal A_n$ that takes value $1$ at $(q_c)_n$
and value $0$ at $(q_d)_n$. For fixed $c,r$, take the tensor product of
these features over $d\ne c$, and use the constant-coordinate feature
at unused coordinates of $W_r$. The resulting
$\phi_c^r\in\mathcal A_{W_r}$ satisfies
\eqref{eq:fock-pinning}: at $q_c$ every factor is $1$, and at each
other $q_d$ at least the factor indexed by $n(c,d,r)$ is $0$.
If $m=1$, both witness sets can be empty.

Finally choose closed cover members $S_{B,a_c}$ containing $v_c$,
coordinate functionals $\lambda_c$ nonzero on $v_c$, and a common
integer $k$ large enough for all coefficient bounds. These choices
place $y$ in one of the charts. Thus $\mathcal H$ is the union of a
countable family of closed sets, as required.
\end{proof}

\section{Families of arbitrarily high escape rank}\label{sec:transfinite}

\begin{definition}
An \emph{admissible family} consists of a nonempty Polish space $P$, a
separable real Hilbert space $B$, and a bounded continuous closed
embedding $j:P\to B$ such that
\begin{enumerate}
\item $H=\operatorname{span}_{\mathbb R}j(P)$ is $F_\sigma$;
\item some continuous linear functional $\rho\in B^*$ satisfies
$\rho(j(t))=1$ for all $t\in P$;
\item a countable family of continuous linear functionals on $B$ has
locally constant restrictions to $j(P)$ that separate its points.
\end{enumerate}
We say the family has \emph{diagonal escape rank at least $\alpha$} if
$\{(t,t):t\in P\}\subseteq R_{j,H}^\alpha$.
\end{definition}

The branch construction in Section~\ref{sec:construction} is an
admissible family of diagonal escape rank at least one. Denote it by
$(P_*,B_*,j_*,H_*,\rho_*)$ in this section. Indeed, its mass functional
is the $h_\varnothing$ coordinate. The linear functions
$v\mapsto\langle v,h_s\rangle/r_s$ restrict to the characteristic
functions of the cylinders $[s]$, hence are locally constant and
separate branches. Lemma~\ref{lem:closedtails}, applied at the root,
and its digit-stabilization proof show that $j_*$ is a closed embedding.
Finally, every neighborhood of a diagonal pair $(t,t)$ contains
$[s]\times[s]$ for some prefix $s$ of $t$. Proposition~\ref{prop:escape}
therefore puts every such pair in $R_{j_*,H_*}^1$.

\begin{lemma}[Successor step]\label{lem:successor-family}
If an admissible family $(P,B,j,H,\rho)$ has diagonal escape rank at
least $\alpha$, then there is an admissible family of diagonal escape
rank at least $\alpha+1$.
\end{lemma}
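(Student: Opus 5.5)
The plan is to combine the given family with the branch family $(P_*,B_*,j_*,H_*,\rho_*)$ of Section~\ref{sec:construction} through a Hilbert tensor product, so that the old family provides the first $\alpha$ stages of escape and the branch family provides one further stage. Concretely, put $P'=P_*\times P$, $B'=B_*\widehat\otimes B$ and $j'(x,t)=j_*(x)\otimes j(t)$, so that $H'=\spanR j'(P')=H_*\otimes_{\mathrm{alg}}H$.

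\textbf{Admissibility of $(P',B',j')$.} This should be routine.
\begin{enumerate}
\item $H'$ is $F_\sigma$ by Lemma~\ref{lem:algebraic-tensor-fsigma}.
\item $\rho_*\otimes\rho$ is a mass functional, since it takes the value $1$ on every $j'(x,t)$.
\item Contracting with $\rho$ in the second factor, or with $\rho_*$ in the first, recovers $j_*(x)$ and $j(t)$ continuously. Hence $j'$ is a bounded continuous closed embedding, because both $j_*$ and $j$ are.
\item The tensors $\ell_*\otimes\rho$ and $\rho_*\otimes\ell$ of the given separating functionals are locally constant on $j'(P')$ and separate its points.
\end{enumerate}
One more fact is needed before the rank argument. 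If $w\notin H$ and $\phi\in B_*^*$ satisfies $\phi(j_*(x))=1$, then $j_*(x)\otimes w\notin H'$. Indeed, if $j_*(x)\otimes w=\sum_i h_i\otimes k_i$ with $h_i\in H_*$ and $k_i\in H$, then contracting with $\phi$ gives $w=\sum_i\phi(h_i)k_i\in H$. The same holds with the roles of the two factors exchanged.

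\textbf{Rank of the new family.} First, Lemma~\ref{lem:escape-tools}(2) applied with $a(t)=(x,t)$, $b(u)=(x,u)$, $v=0$ and $T=j_*(x)\otimes\cdot$ shows that
\[
 \{((x,t),(x,u)):(t,u)\in R_{j,H}^\alpha\}\subseteq R_{j',H'}^\alpha .
\]
The hypothesis of that lemma on $T$ is exactly the contraction fact above. In particular every diagonal pair lies in $R'^\alpha$.

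For stage $\alpha+1$ at a diagonal point $((x,t),(x,t))$, the natural witnesses come from Proposition~\ref{prop:escape}: branches $x_n,y_n\in[s]$ with $j_*(x_n)+j_*(y_n)\to z_s\notin H_*$. The pairs $((x_n,t),(y_n,t))$ have sums converging to $z_s\otimes j(t)$, and this vector is not in $H'$ by the contraction fact applied in the first factor with $\phi=\rho\circ(\text{second factor})$ swapped appropriately. What remains is to show that these off-diagonal pairs already lie in $R'^\alpha$.

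\textbf{Main obstacle.} The required statement is that pairs with different first coordinates lie in $R'^\alpha$. The transfer lemma does not give this directly, because $j'(x_n,t)+j'(y_n,u)$ is not an affine image of $j(t)+j(u)$. I would first try to prove, by transfinite induction on $\xi\le\alpha$, the stronger statement
\[
 \bigl((x,t),(y,u)\bigr)\in R'^\xi\quad\text{whenever }(t,u)\in R_{j,H}^\xi\text{ and }x,y\in P_*.
\]
The successor step needs the following. Take escape sequences $(t_l,u_l)\to(t,u)$ whose sums converge to $w\notin H$. One must show that the tensors $j_*(x)\otimes j(t_l)+j_*(y)\otimes j(u_l)$ have limit points outside $H'$. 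To arrange this, I would pass to subsequences along which $j(t_l)$ and $j(u_l)$ converge separately, using compactness or closedness considerations available from the closed embedding, and then contract with a functional separating $j_*(x)$ from $j_*(y)$.

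If this fails, the fallback is to modify the construction. One option is a two-coordinate direct sum $j'(x,t)=\bigl(j(t),\,j_*(x)\otimes j(t)\bigr)$. Another is the symmetric product with extra branch coordinates, arranged so that the $B$-component alone witnesses escape at stages $\le\alpha$ and the tensor component supplies the final escape.
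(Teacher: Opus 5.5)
\textbf{There is a genuine gap, and the tensor construction itself cannot work for $\alpha\geq1$.} In your family, pairs whose branch coordinates differ are removed already at the first derivative.

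Let $x\neq y$ in $P_*$. Choose distinct nodes $a,b$ of equal length with $x\in[a]$ and $y\in[b]$, and put $\phi_a=\langle\cdot,h_a\rangle/r_a$. This functional equals $1$ on $j_*([a])$ and $0$ on $j_*([b])$. Work on the neighborhood $([a]\times P)\times([b]\times P)$ of $((x,t),(y,u))$. Contracting a sum $j_*(x')\otimes j(t')+j_*(y')\otimes j(u')$ with $\phi_a$ (resp.\ $\phi_b$) in the first factor returns exactly $j(t')$ (resp.\ $j(u')$). Since $j$ is a closed embedding, along any convergent sequence of such sums we get $t'_l\to t^*$ and $u'_l\to u^*$.
\begin{itemize}
\item If $t^*\neq u^*$, contract in the second factor with $\rho$ and with a locally constant functional separating $j(t^*)$ from $j(u^*)$. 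This forces $j_*(x'_l)$ and $j_*(y'_l)$ to converge, and the limit is $j'(x^*,t^*)+j'(y^*,u^*)$.
\item If $t^*=u^*$, contracting with $\rho$ forces $j_*(x'_l)+j_*(y'_l)$ to converge inside the closed set $j_*([a])+j_*([b])\subseteq H_*$.
\end{itemize}
In both cases the limit lies in your $H'=H_*\otimes_{\mathrm{alg}}H$. Hence $((x,t),(y,u))\notin R^1_{j',H'}$.

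Consequences:
\begin{itemize}
\item Your witnesses $((x_n,t),(y_n,t))$ never lie in $R^\alpha_{j',H'}$, and your strengthened inductive claim is already false at $\xi=1$.
\item Concretely, take the given family to be the branch family itself ($\alpha=1$). The same argument with the factors exchanged also removes pairs with $t\neq u$, so $R^1_{j',H'}$ is contained in the diagonal. Since $2j'(P')$ is closed and lies in $H'$, $R^2_{j',H'}=\varnothing$, so the tensor family has diagonal rank exactly one.
\item The subsequence idea in your induction step is self-defeating. If $j(t_l)$ and $j(u_l)$ converged separately, closedness of $j(P)$ would put the limit of the sum in $H$, so escaping sequences never have separately convergent terms.
\item Your first fallback, $(j(t),j_*(x)\otimes j(t))$, has the same defect, because its second component couples $x$ and $t$ in the same multiplicative way.
\end{itemize}

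\textbf{The missing idea is to keep the old and new parameters additively decoupled.} The paper takes $J(t,s)=(j(t),j_*(s))\in B\oplus B_*$ on $P\times P_*$.
\begin{itemize}
\item Its span is $\widetilde H=\{(v,w)\in H\times H_*:\rho(v)=\rho_*(w)\}$. Subtracting $\rho(v)J(t_0,s_0)$ from a mass-matched pair reduces it to the mass-zero parts, which are spanned by differences of atoms. So $\widetilde H$ is $F_\sigma$, and admissibility is inherited from the two factors.
\item For \emph{any} fixed $s,r\in P_*$, equal or not, $J(t,s)+J(u,r)=(j(t)+j(u),\,j_*(s)+j_*(r))$ is an affine image of $j(t)+j(u)$. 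Moreover $(z,w)\notin\widetilde H$ whenever $z\notin H$. So Lemma~\ref{lem:escape-tools}(2) gives $((t,s),(t,r))\in R^\alpha_{J,\widetilde H}$ for all $t\in P$ and all $s,r\in P_*$, including $s\neq r$. This is exactly what fails in your tensor family.
\item At a diagonal point $((t,s),(t,s))$, keep $t$ fixed and run the paired branch escape of Proposition~\ref{prop:escape} in the second coordinate. Those pairs lie in $R^\alpha_{J,\widetilde H}$, and their sums converge to $(2j(t),z)$ with $z\notin H_*$, which is outside $\widetilde H$. Hence every diagonal pair survives to stage $\alpha+1$.
\end{itemize}
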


\begin{proof}
On $P\times P_*$ define
\[
 J(t,s)=(j(t),j_*(s))\in B\oplus B_*.
\]
Its algebraic span is exactly
\begin{equation}
 H' =\{(v,w)\in H\times H_*:\rho(v)=\rho_*(w)\}.
\label{eq:mass-matched-span}
\end{equation}
One inclusion follows by applying the mass functionals to a finite
linear combination. For the reverse, fix reference parameters $t_0,s_0$.
For any mass-one generating family one has
\[
 \operatorname{span}\{j(t)-j(t_0):t\in P\}=H\cap\ker\rho.
\]
Indeed, the mass of a finite combination is the sum of its coefficients.
The span of $J$ contains $(j(t)-j(t_0),0)$ and
$(0,j_*(s)-j_*(s_0))$, as differences of $J$-atoms. Subtracting
$\rho(v)J(t_0,s_0)$ from a mass-matched pair reduces both components
to these mass-zero spans, proving \eqref{eq:mass-matched-span}.

The set in \eqref{eq:mass-matched-span} is $F_\sigma$, being the
intersection of $H\times H_*$ with a closed linear condition. The
product family is bounded and is a continuous closed embedding.
Projection to the first factor followed by $\rho$ is a mass functional,
and the separating locally constant linear functions are inherited
from the two factors. Thus the family is admissible.

Fix any $s,r\in P_*$. The maps $t\mapsto(t,s)$ and $u\mapsto(u,r)$
send $R_{j,H}^\xi$ into $R_{J,H'}^\xi$ by
Lemma~\ref{lem:escape-tools}: their sum is
\[
 (j(t)+j(u),\ j_*(s)+j_*(r)),
\]
and an escaping limit in the first coordinate is outside $H'$. By
the rank assumption, for every $t\in P$ and every $s,r\in P_*$,
\begin{equation}
 ((t,s),(t,r))\in R_{J,H'}^\alpha.
\label{eq:successor-slice}
\end{equation}
Now fix $(t,s)$ and any neighborhood of its diagonal pair in
$(P\times P_*)^2$. Keep $t$ fixed and shrink the two second-coordinate
neighborhoods to a common neighborhood of $s$. Since
$(s,s)\in R_{j_*,H_*}^1$, there are pairs $(s_l,r_l)$ in this
neighborhood with $j_*(s_l)+j_*(r_l)\to z\notin H_*$. The pairs in
\eqref{eq:successor-slice} lie in the preceding remainder and their
$J$-sums converge to $(2j(t),z)\notin H'$. Hence the diagonal pair
survives to stage $\alpha+1$.
\end{proof}

\begin{lemma}[Limit step]\label{lem:limit-family}
Let $\lambda$ be a nonzero countable limit ordinal and let
$(\alpha_n)_{n\geq1}$ be an increasing sequence of ordinals less than
$\lambda$ with supremum $\lambda$. If for each $n$ there is an admissible
family of diagonal escape rank at least $\alpha_n$, then there is one
of diagonal escape rank at least $\lambda$.
\end{lemma}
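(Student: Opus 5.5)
The plan is to apply Lemma~\ref{lem:fock-fsigma} to the given families $(P_n,B_n,j_n,H_n,\rho_n)$ of diagonal escape rank at least $\alpha_n$. Admissibility supplies hypotheses (1)--(3) of that lemma directly. The lemma then gives an admissible family $(Q,\mathcal B,J,\mathcal H)$, with $Q=\prod_nP_n$, $J(q)_F=(\prod_{n\in F}\beta_n)\bigotimes_{n\in F}j_n(q_n)$, and $\mathcal H$ an $F_\sigma$ span. What remains is to show that every diagonal pair $(q,q)$ lies in $R_{J,\mathcal H}^{\lambda}=\bigcap_n R_{J,\mathcal H}^{\alpha_n}$. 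Since the derivatives decrease, it suffices to prove $(q,q)\in R_{J,\mathcal H}^{\alpha_n}$ for each fixed $n$.

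For fixed $n$ I would use the transfer part of Lemma~\ref{lem:escape-tools}. Freeze all coordinates except the $n$th, and take the maps $a=b:P_n\to Q$ given by $a(t)=(q_1,\dots,q_{n-1},t,q_{n+1},\dots)$. Using the factorization $\mathcal B=E_{\{n\}}\widehat\otimes\mathcal B_{\{n\}^c}$ from the proof of Lemma~\ref{lem:fock-fsigma}, we have $J(a(t))=(1,\beta_nj_n(t))\otimes w$ with $w=J_{\{n\}^c}(q_{\{n\}^c})$. So
\[
 J(a(t))+J(a(u))=(2,\beta_n(j_n(t)+j_n(u)))\otimes w .
\]
This suggests $v=(2,0)\otimes w$ and $T(z)=(0,\beta_nz)\otimes w$, a bounded linear map $B_n\to\mathcal B$. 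These give exactly the identity required by Lemma~\ref{lem:escape-tools}(2). Once that lemma applies, $(q,q)=(a\times b)(q_n,q_n)\in R_{J,\mathcal H}^{\alpha_n}$ follows from the rank hypothesis.

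The main obstacle is the escape condition: if $z\notin H_n$, then $v+T(z)=(2,\beta_nz)\otimes w\notin\mathcal H$. I would argue this by contraction. Note first that $w\neq 0$, since its empty coordinate is $1$. Choose a continuous functional $\psi$ on $\mathcal B_{\{n\}^c}$ with $\psi(w)=1$ that is compatible with the chart structure. The natural candidate is the empty-coordinate functional, which gives value $1$ on every $J_{\{n\}^c}$-atom. Applying $\mathrm{id}\otimes\psi$ maps $\mathcal H$ into $V_{\{n\}}=\{(\rho_n(y),\beta_ny):y\in H_n\}$, because each atom $A_{\{n\}}(t)\otimes J_{\{n\}^c}(\cdot)$ contracts to $A_{\{n\}}(t)$. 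Hence, if $(2,\beta_nz)\otimes w\in\mathcal H$, contracting gives $(2,\beta_nz)\in V_{\{n\}}$, so $z\in H_n$, which is a contradiction.

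I need to double-check that this contraction is a bounded linear map. It is, since it is $\mathrm{id}\otimes\psi$ with $\psi$ the bounded projection onto the $\varnothing$ summand. I also need that the image of the algebraic span lands in the algebraic span $V_{\{n\}}$. This holds because a finite combination contracts to a finite combination of packets $(1,\beta_nj_n(t))$.

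With the escape condition verified, Lemma~\ref{lem:escape-tools}(2) gives $(a\times a)(R^{\xi}_{j_n,H_n})\subseteq R^{\xi}_{J,\mathcal H}$ for all $\xi$. Taking $\xi=\alpha_n$ and intersecting over $n$ places every diagonal pair in $R^\lambda_{J,\mathcal H}$. The admissibility conclusions come directly from Lemma~\ref{lem:fock-fsigma}.
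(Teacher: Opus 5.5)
Your proposal is correct and follows the paper's proof. Both apply Lemma~\ref{lem:fock-fsigma}, insert a single coordinate via the factorization $\mathcal B=(\mathbb R\oplus B_n)\widehat\otimes\mathcal B_{\{n\}^c}$, transfer the rank through Lemma~\ref{lem:escape-tools}(2), and check the escape condition by a bounded map back into $B_n$. The differences are cosmetic: the paper uses the linear map $T_i(v)=(\rho_i(v),\beta_i v)\otimes w$ with zero offset instead of your affine split, and it verifies escape with the singleton-coordinate projection $p_i$, which is simply the $B_n$-component of your empty-coordinate contraction.
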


\begin{proof}
Apply Lemma~\ref{lem:fock-fsigma} to the given families, with the
notation $J:Q\to\mathcal H\subseteq\mathcal B$ from that lemma.
It provides all admissibility conditions. Fix $q\in Q$ and an index
$i$. Let $w=J_{\{i\}^c}(q_{\{i\}^c})$ be the fixed outside tensor
family vector. In the canonical factorization
\[
 \mathcal B=(\mathbb R\oplus B_i)
                   \widehat\otimes\mathcal B_{\{i\}^c},
\]
define a bounded linear map
\[
 T_i:B_i\to\mathcal B,\qquad
 T_i(v)=(\rho_i(v),\beta_i v)\otimes w.
\]
For $t\in P_i$ this gives
\[
 T_i(j_i(t))=J(q\text{ with coordinate }i\text{ replaced by }t).
\]
Let $p_i:\mathcal B\to B_i$ be projection to the singleton coordinate
$\{i\}$ divided by $\beta_i$. Since the empty component of $w$ is $1$,
$p_iT_i$ is the identity. Moreover,
\[
 p_i(\mathcal H)\subseteq H_i,\qquad T_i(H_i)\subseteq\mathcal H.
\]
The first follows by applying $p_i$ to the generating atoms; the
second follows by applying $T_i$ to a finite linear combination of
$j_i$-atoms. In particular, $z\notin H_i$ implies
$T_i(z)\notin\mathcal H$.

Lemma~\ref{lem:escape-tools}, using the coordinate insertion in both
parameter variables, therefore sends $R_{j_i,H_i}^\xi$ into
$R_{J,\mathcal H}^\xi$. Applied at $(q_i,q_i)$ this gives
$(q,q)\in R_{J,\mathcal H}^{\alpha_i}$ for every $i$. The remainders
are decreasing and $(\alpha_i)$ is cofinal in $\lambda$, so
$(q,q)\in R_{J,\mathcal H}^\lambda$. The point $q$ was arbitrary.
\end{proof}

\begin{theorem}\label{thm:arbitrary-ranks}
For every countable ordinal $\alpha$ there is an admissible family
$(P_\alpha,B_\alpha,j_\alpha,H_\alpha,\rho_\alpha)$ of diagonal escape
rank at least $\alpha$. For $\alpha>0$ the Hilbert space $B_\alpha$
may be taken isometric to $\ell_2$.
\end{theorem}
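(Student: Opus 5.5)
We prove Theorem~\ref{thm:arbitrary-ranks} by transfinite induction on the countable ordinal $\alpha$, assembling the three constructions already established.

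\emph{Base cases.} For $\alpha=0$ every admissible family works, since $R^0_{j,H}=P^2$ contains the diagonal. In particular the branch family $(P_*,B_*,j_*,H_*,\rho_*)$ of Section~\ref{sec:construction} will do. For $\alpha=1$ we take this same branch family. It was verified at the start of this section to be admissible and of diagonal escape rank at least one, and its space $B_*$ is isometric to $\ell_2$ by construction.

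\emph{Successor step.} If $\alpha=\beta+1$, the induction hypothesis supplies an admissible family of rank at least $\beta$. Lemma~\ref{lem:successor-family} then yields one of rank at least $\beta+1$.

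\emph{Limit step.} If $\alpha=\lambda$ is a limit, we fix an increasing cofinal sequence $(\alpha_n)_{n\ge1}$ in $\lambda$, which exists because $\lambda$ is countable. The induction hypothesis gives, for each $n$, an admissible family of rank at least $\alpha_n$. Here the axiom of (countable) choice selects one such family for each $n$, which is harmless. Lemma~\ref{lem:limit-family} then produces a family of rank at least $\lambda$. A uniform bound $M_n$ is available because admissible families are bounded by definition.

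\emph{The $\ell_2$ clause.} For $\alpha>0$ it remains to check that the Hilbert space is separable and infinite dimensional. A separable real Hilbert space with a countably infinite orthonormal basis is isometric to $\ell_2$, as recorded in the preliminaries.
\begin{itemize}
\item At $\alpha=1$ this is $B_*$.
\item At a successor stage the space is $B\oplus B_*$. It is separable as a finite direct sum of separable spaces, and infinite dimensional because it contains $B_*$.
\item At a limit stage $\mathcal B$ is a countable Hilbert direct sum of finite tensor products of separable spaces. It is therefore separable.
\end{itemize}
For infinite dimensionality at a limit stage, $\mathcal B$ contains infinitely many orthogonal nonzero summands $B_{\{n\}}$, since each $B_n\ne0$. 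This holds because $j_n(P_n)$ is nonempty and $\rho_n(j_n(t))=1$.

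One minor point needs care: the successor lemma applied to the stage-$0$ family. This case is avoided, because we start the induction at $1$ with $B_*$. Beyond that, the main obstacle is not in this theorem. The substantive work lies in Lemmas~\ref{lem:fock-fsigma}, \ref{lem:successor-family} and~\ref{lem:limit-family}, which we invoke as stated. The remaining care is only to confirm that their hypotheses (admissibility, cofinality, boundedness) are met at each stage and that the separability bookkeeping goes through.
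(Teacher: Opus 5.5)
Your proof is correct and follows the paper's argument: a transfinite recursion applying Lemma~\ref{lem:successor-family} at successors and Lemma~\ref{lem:limit-family} at limits, with separability and infinite dimensionality checked at each stage. The differences are cosmetic. The paper starts from the one-point family $B_0=\mathbb R$, $j_0=1$, which is itself admissible, so the successor lemma applies to it directly and the ``minor point'' you flag is not an obstacle. Your remark that the infinitely many nonzero singleton summands $B_{\{n\}}$ already make $\mathcal B$ infinite dimensional is a slightly more direct version of the paper's observation.
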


\begin{proof}
At stage zero use a one-point parameter space, $B_0=\mathbb R$,
$j_0=1$, $H_0=\mathbb R$, and the identity mass functional. The
separation condition is vacuous. Apply
Lemma~\ref{lem:successor-family} at successors and
Lemma~\ref{lem:limit-family} at nonzero countable limits. This is a
transfinite recursion through the countable ordinals; each individual
limit uses only countably many earlier families. Finite tensor products
and countable Hilbert direct sums of separable Hilbert spaces are
separable, so every ambient space remains separable. Every positive
successor contains a copy of $B_*$ and is infinite dimensional; every
nonzero limit contains an infinite dimensional singleton summand from
an earlier positive stage. Thus each $B_\alpha$, $\alpha>0$, is a
separable infinite dimensional real Hilbert space and hence isometric
to $\ell_2$.
\end{proof}

\subsection{The finite stages explicitly}

The first few derivatives admit a concrete description. This also
illustrates the difference between a closed generating family and a
span with nontrivial escape behavior.

\begin{proposition}\label{prop:finite-stages}
For an integer $N\geq1$, let
\[
 J_N(x_1,\ldots,x_N)=(j_*(x_1),\ldots,j_*(x_N)),\qquad
 K_N=\operatorname{span}_{\mathbb R}J_N(P_*^N).
\]
For $0\leq k\leq N$, the $k$th escape remainder of this family is
\[
 R_{J_N,K_N}^k
 =\{(x,y)\in(P_*^N)^2:\#\{i:x_i=y_i\}\geq k\}.
\]
The next remainder $R_{J_N,K_N}^{N+1}$ is empty.
\end{proposition}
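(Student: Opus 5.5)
The plan is induction on $k$, proving at each stage the exact equality $R_{J_N,K_N}^k=A_k$, where
\[
 A_k=\{(x,y)\in(P_*^N)^2:\#\{i:x_i=y_i\}\geq k\}.
\]
Stage zero is trivial, since $A_0=(P_*^N)^2=R^0$. For the successor step it suffices to show $\mathcal D_{J_N,K_N}(A_k)=A_{k+1}$ for $k<N$. The final claim then follows from the case $k=N$: we get $R^{N+1}=\mathcal D(A_N)$, and this step shows that $\mathcal D$ removes from $A_N$ every point with at most $N$ agreements, i.e.\ all of $A_N$. So $R^{N+1}=A_{N+1}=\varnothing$.

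First I would record a description of $K_N$. Iterating the mass-matched computation \eqref{eq:mass-matched-span} from Lemma~\ref{lem:successor-family}, with the $h_\varnothing$-coordinate $\rho_*$ as mass functional, gives
\[
 K_N=\{(v_1,\ldots,v_N)\in H_*^N:\rho_*(v_1)=\cdots=\rho_*(v_N)\}.
\]
Since $\rho_*$ is continuous and every pair sum has all masses equal to $2$, a limit $z$ of pair sums $J_N(x^l)+J_N(y^l)$ escapes $K_N$ if and only if some coordinate limit $z_i$ lies outside $H_*$.

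For the inclusion $A_{k+1}\subseteq\mathcal D(A_k)$, take $(x,y)$ with agreement set $I$, $|I|\geq k+1$, and fix $i\in I$. Given a neighborhood, freeze all coordinates other than $i$. By Proposition~\ref{prop:escape}, applied to a cylinder $[s]$ around $x_i=y_i$ small enough, there are pairs $(s_l,r_l)$ in that neighborhood with $j_*(s_l)+j_*(r_l)\to z_i\notin H_*$. Replacing the $i$th coordinates of $x$ and $y$ by $s_l$ and $r_l$ gives pairs that still agree on $I\setminus\{i\}$, so they lie in $A_k=R^k$. Their sums converge to a vector whose $i$th coordinate is $z_i\notin H_*$, hence to a vector outside $K_N$. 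So $(x,y)$ survives to stage $k+1$.

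For the reverse inclusion, take $(x,y)\in A_k$ with exactly $k$ agreements, on a set $I$. For $i\notin I$, choose a level $L$ with $s_i=x_i\restr L\neq s_i'=y_i\restr L$, and use the neighborhood $\prod_i[s_i]\times\prod_i[s_i']$ (with $s_i=s_i'$ an arbitrary prefix for $i\in I$). Points of $A_k$ in it cannot agree off $I$, because the cylinders there are disjoint, so they agree exactly on $I$. Their sums then have coordinates in $2j_*(P_*)$ for $i\in I$, which is closed because $j_*$ is a closed embedding. For $i\notin I$ the coordinates lie in the set $C(\{s_i,s_i'\},1)$ of Proposition~\ref{prop:fsigma}, which is closed and contained in $H_*$. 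Hence every ambient limit has all coordinates in $H_*$. Continuity of $\rho_*$ keeps all masses equal to $2$, so by the description of $K_N$ the limit lies in $K_N$, and $(x,y)$ is removed.

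I expect the main obstacle to be this closedness step, where two pitfalls must be checked. One must make sure that $S=\{s_i,s_i'\}$ consists of distinct nodes of the same length, as Proposition~\ref{prop:fsigma} requires; this is handled by choosing a common level $L$. One must also confirm that the iterated form of \eqref{eq:mass-matched-span} really identifies $K_N$, so that coordinatewise membership in $H_*$ together with equal masses is sufficient.
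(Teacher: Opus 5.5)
Your proof is correct and follows the paper's argument: the same induction on $k$, the same mass-matched description of $K_N$, survival via a paired escape (Proposition~\ref{prop:escape}) in one additional agreeing block, and removal via a product cylinder neighborhood on which pairs in $R^k$ must agree exactly on $I$. The only difference is minor. The paper shows $j_*([s])+j_*([t])$ is closed by an orthogonality argument, whereas you place the coordinate sums inside the closed subsets $2j_*(P_*)$ and $C(\{s_i,s_i'\},1)$ of $H_*$; this suffices, since only the closure of the sum image needs to lie in $K_N$.
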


\begin{proof}
The mass-matching argument of Lemma~\ref{lem:successor-family} gives
\[
 K_N=\{(v_1,\ldots,v_N)\in H_*^N:
           \rho_*(v_1)=\cdots=\rho_*(v_N)\}.
\]
Two elementary closedness facts will be useful. Each $2j_*([s])$ is
closed. If $s,t$ are distinct prefixes of the same length, then
$j_*([s])+j_*([t])$ is also closed: after subtracting the fixed prefix
vectors $v_s+v_t$, the two summands lie in orthogonal closed descendant
coordinate subspaces, and each tail image is closed by
Lemma~\ref{lem:closedtails}.

Induct on $k$, starting with $k=0$. Suppose the formula holds at stage
$k$, and consider a pair with exactly $k$ equal block parameters.
On each equal block choose a common cylinder neighborhood of its two
parameters; on each unequal block choose disjoint cylinders with
prefixes of the same length. Within this product neighborhood, a pair
in $R^k$ must remain equal on all the originally equal blocks, since
the other blocks cannot become equal. Its sum image is therefore the
finite product of closed sets of the form $2j_*([s])$ on the equal
blocks and $j_*([s])+j_*([t])$ on the unequal blocks. This product is
closed. Every coordinate has mass $2$, so the product is contained in
$K_N$. The pair is removed at the next derivative.

Conversely, at a pair with at least $k+1$ equal blocks, keep $k$ of
these blocks equal and fixed. In any given neighborhood, perform the
paired escape from Proposition~\ref{prop:escape} in one further
equal block, keeping every other parameter fixed. All pairs remain
in $R^k$, while their sums converge to a vector with one coordinate
outside $H_*$. That limit is outside $K_N$, so the pair survives.
This proves the formula at the next stage. At stage $N$, every
remaining pair has exactly $N$ agreements and is removed by the
same closedness argument, giving $R^{N+1}=\varnothing$.
\end{proof}

\section{Nonexistence of a universal relation}\label{sec:conclusion}

\begin{proof}[Proof of Theorem~\ref{thm:main}]
Let $F$ be any $F_\sigma$ equivalence relation on a nonempty Polish
space $Y$. Fix a closed reflexive symmetric exhaustion as in
Section~\ref{sec:target}. Let $r_n$ be countable ordinals with
$Z_n^{r_n}=\varnothing$, and put $\gamma=\sup_n r_n$.
Reflexivity and nonemptiness of $Y$ imply $Z_n\ne\varnothing$, so
$\gamma\geq1$.

Take the family from Theorem~\ref{thm:arbitrary-ranks} for
$\alpha=\gamma$. Its span $H_\gamma$ is $F_\sigma$, so its coset
relation $E_{H_\gamma}$ is $F_\sigma$. If a Baire-measurable reduction
of this relation to $F$ existed, Corollary~\ref{cor:rank-obstruction}
would give an $n$ with $Z_n^\gamma\ne\varnothing$. But
$r_n\leq\gamma$ and the remainders are decreasing, so
$Z_n^\gamma\subseteq Z_n^{r_n}=\varnothing$, a contradiction.
Using a linear isometric identification of $B_\gamma$ with $\ell_2$ gives the claimed ambient space.
An empty target space cannot receive a reduction from a nonempty
space and is excluded immediately. Thus no $F_\sigma$ equivalence
relation can be universal for the class.
\end{proof}

\begin{corollary}\label{cor:bounded-difference}
The relation $E_{\ell_\infty}$ is not universal for $F_\sigma$
equivalence relations. The branch-space relation $E_{H_*}$ already
witnesses this failure.
\end{corollary}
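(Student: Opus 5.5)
The first assertion follows by combining Theorem~\ref{thm:rosendal} with the main theorem, and the second by checking that $E_{H_*}$ itself defeats $\Einf$. I would argue by contradiction from the existence of a Borel reduction of all $F_\sigma$ relations to $\Einf$. First I record that $\Einf$ is an $F_\sigma$ equivalence relation on the Polish space $\R^{\N}$, since $\Einf=\bigcup_M R_M$ with each $R_M$ closed, as shown in the preliminaries. Applying Theorem~\ref{thm:main} with $F=\Einf$ then produces an $F_\sigma$ coset relation with no Baire-measurable reduction to $\Einf$. Every Borel map is Baire measurable, so this relation is not Borel reducible to $\Einf$, and $\Einf$ is not universal.

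For the sharper claim about $E_{H_*}$, I would compute the composition rank of $\Einf$ for the exhaustion $F_n=R_n$, which is closed, reflexive and symmetric. For $(a,b,c)\in Z_n$ the triangle inequality for $\Delta$ gives $\Delta(a,c)\le 2n$, so $m_n\le 2n$ on all of $Z_n$. Hence every point of $Z_n$ lies in the relative interior of the closed set $Z_n\cap\{m_n\le 2n\}$, which is all of $Z_n$, and therefore $\partial_n Z_n=\varnothing$. In the notation of Lemma~\ref{lem:target-countable}, this means $Z_n^1=\varnothing$ for every $n$, so $r_n=1$ and $\gamma=1$.

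Next I would recall that the branch family $(P_*,B_*,j_*,H_*)$ has diagonal escape rank at least one. This was verified right after the definition of admissible families, using Proposition~\ref{prop:escape}. The hypothesis of Corollary~\ref{cor:rank-obstruction} therefore holds with $\xi=1$.

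Now suppose $f$ is a Baire-measurable, in particular Borel, reduction of $E_{H_*}$ to $\Einf$. Corollary~\ref{cor:rank-obstruction} would then give some $n$ with $Z_n^1\neq\varnothing$, contradicting the previous paragraph. Finally, $H_*$ is $F_\sigma$ by Proposition~\ref{prop:fsigma}, so $E_{H_*}$ is an $F_\sigma$ equivalence relation on $B_*\cong\ell_2$.

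The only step needing care is the rank computation, namely that uniform boundedness of $m_n$ kills the derivative in one step. This is immediate from the definition of $\partial_n$, so I do not expect a genuine obstacle. The one thing to confirm is that the argument uses only the exhaustion $(R_n)$ fixed above, which Section~\ref{sec:target} explicitly allows.
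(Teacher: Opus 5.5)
Your proof is correct and its second part is exactly the paper's argument: the exhaustion $F_n=R_n$, the triangle-inequality bound $m_n\le 2n$ giving $Z_n^1=\varnothing$, diagonal pairs of the branch family lying in $R^1_{j_*,H_*}$, and Corollary~\ref{cor:rank-obstruction} with $\xi=1$. The separate derivation of the first assertion from Theorem~\ref{thm:main} is valid but unnecessary, and Theorem~\ref{thm:rosendal} plays no role, since the $E_{H_*}$ argument together with Proposition~\ref{prop:fsigma} already proves both claims.
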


\begin{proof}
For bounded difference use the closed levels
$F_n=\{(a,b):\sup_i|a(i)-b(i)|\leq n\}$. If
$(a,b,c)\in Z_n$, the triangle inequality gives $(a,c)\in F_{2n}$.
Thus $m_n\leq2n$ everywhere on $Z_n$, and $Z_n^1=\varnothing$.
Every diagonal pair of the branch family belongs to
$R_{j_*,H_*}^1$, contradicting Corollary~\ref{cor:rank-obstruction}
if a reduction existed.
\end{proof}

The same main theorem applies with $F=E_{H_*}$, or with any other
proposed $F_\sigma$ candidate. The obstruction is stronger than failure
of a particular construction: even the coset relations of $F_\sigma$
real linear subspaces of a fixed Hilbert space have no common
$F_\sigma$ upper bound under Borel reducibility.

\noindent\begin{minipage}{\textwidth}
\small
\textsc{School of Mathematical Sciences and School of Pre-university,}\\
\textsc{Dalian Minzu University}\\[0.4em]
\textit{Email address:}\enspace
\href{mailto:wangruijun@dlnu.edu.cn}{\texttt{wangruijun@dlnu.edu.cn}}
\end{minipage}

\end{document}